\newcommand{\pr}{P}
\newcommand{\apr}{Q}
\newcommand{\unex}{\overline{E}}
\newcommand{\upr}{\overline{\pr}}
\newcommand{\pspace}{\mathcal{X}}
\newcommand{\credalset}{\mathcal{M}}
\newcommand{\setofprobs}{\mathbb{P}}
\DeclareMathOperator*{\argmin}{argmin}
\DeclareMathOperator*{\argmax}{argmax}
\newtheorem{theorem}{Theorem}
\newtheorem{lemma}[theorem]{Lemma}
\newtheorem{proposition}[theorem]{Proposition}
\newtheorem{corollary}[theorem]{Corollary}
\newtheorem{example}[theorem]{Example}
\title[A study of the conjunction of possibility measures]{A geometric and game-theoretic study of the conjunction of possibility measures}
\author{Enrique Miranda}
\address{University of Oviedo,  Department of Statistics and Operations Research, Spain}
\email{mirandaenrique@uniovi.es}
\author{Matthias C. M. Troffaes}
\address{Durham University, Department of Mathematical Sciences, UK}
\email{matthias.troffaes@gmail.com}
\author{S\'{e}bastien Destercke}
\address{Universit{\'e} de Technologie de Compiegne, CNRS UMR 7253, France}
\email{sdestercke@gmail.com}
\keywords{possibility measure; conjunction; imprecise probability; game theory; natural extension; coherence}
\begin{document}

\begin{abstract}
In this paper, we study the conjunction of possibility measures when
they are interpreted as coherent upper probabilities, that is, as
upper bounds for some set of probability measures. We identify
conditions under which the minimum of two possibility measures
remains a possibility measure. We provide graphical way to check
these conditions, by means of a zero-sum game formulation of the
problem. This also gives us a nice way to adjust the initial
possibility measures so their minimum is guaranteed to be a
possibility measure. Finally, we identify conditions under which the
minimum of two possibility measures is a coherent upper probability,
or in other words, conditions under which the minimum of two
possibility measures is an exact upper bound for the intersection of
the credal sets of those two possibility measures.
\end{abstract}

\maketitle

\section{Introduction}

\subsection{Possibility Measures: Why (Not)}

Imprecise probability models \cite{1991:walley} are useful in
situations where there is insufficient information to identify a
single probability distribution. Many different kinds of imprecise
probability models have been studied in the literature
\cite{1996:walley::uncertaintyinexpertsystems}.
It has
been argued that closed convex sets of probability measures, also
called \emph{credal sets}, provide a unifying framework for
many---if not most---of these models
\cite{1991:walley,2008:miranda::survey:lowprevs}.

A downside of using credal sets in their full generality is that
they can be computationally quite demanding, particularly in
situations that involve many random variables. Therefore, in
practice, it is often desirable to work with simpler models whose
practicality compensate their limited expressiveness.
\emph{Possibility measures}
\cite{1978:zadeh::possibility,1988:dubois:possibility,1997:decooman:possibility1,1999:decooman:aeyels::sup:pres:upp:prob}
are among the simplest of such models, and present a number of
distinct advantages:
\begin{itemize}
\item Possibility measures can be easily elicited from experts, either through linguistic assessments~\cite{2005:decooman::behavioural}
or through lower confidence bounds over nested sets~\cite{1995:sandri}.
\item Possibility distributions provide
compact and easily interpretable graphical representations.
\item In large models, when exact computations are costly,
possibility measures can be simulated straightforwardly through random sets~\cite{2009:alvarez}
(for example to propagate uncertainty through complex models~\cite{2007:baudrit}).
\item Lower and upper expectations induced by possibility measures can be computed exactly by Choquet integration~\cite[Section~7.8]{2014:troffaes:decooman::lower:previsions}.
\item When interpreted as sets of probability measures,
possibility measures have a limited number of extreme points~\cite{2003:miranda::extreme,2006:kroupa}.
Many inference algorithms, for instance many of those used in graphical models,
employ extreme point representations: using
possibility measures in such algorithms will reduce the computational effort
required.
\end{itemize}

An obvious disadvantage of using a family
of simpler models is that the family may not be rich enough to allow
certain standard operations. For instance, multivariate joint models
obtained from possibilistic marginals are usually not possibility
distributions~\cite{2003:miranda::epistemic}, hence
outer-approximating possibility measures have been
proposed~\cite{2013:troffaes:infsci:pbox:possib,2009:destercke} to
allow one to use the practical advantages of such models.

\subsection{Formulation of the problem}

In this paper, we focus exclusively on the \emph{conjunction} of two
models, that is, the intersection of two credal sets.
The conjunction is of interest, for instance, when
possibility measures have been elicited from different experts, and we want to
know which probability measures are compatible with the assessments
of all experts simultaneously.
As such,
the conjunction is a combination rule that aggregates
pieces of information consisting of
several inputs to the same problem.

Many combination rules for imprecise probability models
are discussed in the literature;
see for instance
\cite{1994:coolen,1998:moral:delsagrado:aggregation,1999:dubois,2008:hunter,2010:benavoli:aggregation,2004:decooman:troffaes::products:aggregation}.
In this paper, we define the conjunction of two possibility measures
as the upper envelope of the set of probability measures that are
compatible (i.e., dominated) by both.
The following questions arise:
\begin{itemize}
\item It may happen that there is no probability measure
that is compatible with both possibility measures,
in which case the conjunction does not exist.
In the language
of imprecise probability theory, this means that the conjunction
\emph{incurs sure loss}.
When does this happen?
\item Even when there is at least one probability
measure that is compatible with both possibility measures,
the upper envelope may not
be a possibility measure. In order words, it is not guaranteed that
the conjunction on possibility measures is \emph{closed} \cite{1999:dubois}.
When is the conjunction of two possibility measures
again a possibity measure?
If it is not, can we effectively approximate it by a possibility measure?
\item Finally, if the conjunction is a possibility measure,
can we express that possibility measure directly in terms of the two
possibility measures that we are starting from, without going through
their credal sets?
\end{itemize}

We will answer each of the questions above,
using the notions of avoiding sure loss, coherence and
natural extensions from the behavioural theory of imprecise
probabilities \cite{1991:walley}.
The main contributions of this paper are:
\begin{itemize}
\item From a theoretical viewpoint, we provide
sufficient and necessary conditions for the intersection to be again
a credal set that can be represented by a possibility measure
(Theorems~\ref{thm:minmax} and~\ref{thm:minmax:2}).
\item From a practical perspective, we derive from these conditions
correction strategies such that the intersection of the corrected
models is an outer-approximating possibility distribution
(Lemma~\ref{lem:possibonsingletons} and
Theorem~\ref{thm:notposswhennonzero}).
\end{itemize}
Interestingly, some of our results can be proven quite elegantly by
means of standard results from zero-sum game theory
(Theorem~\ref{thm:minmax:game}). This theory
also leads us to a graphical method
to check the conditions and to apply
the correction strategy (Section~\ref{sec:graphical-method-examples}).

\subsection{Related literature}

The literature on the conjunction of possibility measures
is somewhat scarce.
However, there are quite a few related results that have been proven
in the context of evidence theory, which from a formal point of view
includes possibility theory as a particular case.

The compatibility of two possibility measures, meaning that the intersection of their associated sets of probabilities is non-empty,
was characterised by Dubois and Prade in \cite{1992:dubois::upper}.
Related work for belief functions was done by Chateauneuf in
\cite{1994:chateauneuf::combination}.

With respect to the conjunction of two possibility measures
again being a possibility measure, a necessary
condition is the \emph{coherence} of the minimum of these two
possibility measures. This coherence was investigated by
Zaffalon and Miranda in \cite{2013:zaffalon::probtime}. We are not
aware of any necessary and sufficient conditions for the conjunction
determining a possibility measures, and the only existing results
are counterexamples showing that this need not be the case: see
\cite{1992:dubois::upper}, and also
\cite{1994:chateauneuf::combination} for the case of belief
functions.



A related problem that has received more attention
is the connection between conjunction operators of possibility
theory and the conjunction operators of evidence theory: for example
Dubois and Prade \cite{1988:dubois::comb} study how Dempster's rule
relate to possibilistic conjunctive operators, and Destercke and
Dubois~\cite{2011:destercke} relate belief function combinations to
the minimum rule of possibility theory.

\subsection{Structure of the paper}

The paper is organised as follows. Section~\ref{sec:notation}
presents the notation we use and the problem we propose to tackle,
namely the properties of the conjunction of two possibility
measures. We begin in Section~\ref{sec:asl-coherence} by providing
conditions for the intersection of the credal sets associated with
two possibility measures to be non-empty, which means that the
conjunction of the possibility measures avoids sure loss. Then we
investigate in which cases this conjunction is a coherent upper
probability, meaning that it is the upper envelope of a credal set
(namely, the intersection of the two credal sets determined by the
possibility measures).

As we shall see, the coherence of the conjunction of two possibility
measures does not guarantee it is a possibility measure itself. We
deal with this problem in Section~\ref{sec:upr:possib}, by studying
under which conditions the upper probability resulting from the minimum of
two possibility measures is again a possibility measure.
We also provide a graphical way to check these conditions
that we also use to propose some correction strategy, as well as some
illustrative and practical examples.

When this conjunction avoids sure loss but is not coherent, we can
always consider its natural extension, that corresponds to taking
the upper envelope of the intersection of the credal sets, and that
is the greatest coherent upper probability that is dominated by the
conjunction of the two possibility measures. In
Section~\ref{sec:unex:possib}, we consider the problem of
establishing when this natural extension is a possibility measure.
Section~\ref{sec:diagnosis-example} illustrates the usefulness of
our results on a medical diagnosis problem. We conclude the paper in
Section~\ref{sec:conclusions} with some additional comments and
remarks.

\section{Notation}
\label{sec:notation}

\subsection{Upper Probabilities, Conjunction, Possibility Measures}

Consider a possibility space $\pspace$.
In this paper, we assume that $\pspace$ is finite.
$\wp(\pspace)$ denotes the power set (set of all subsets) of $\pspace$.
A function $Q\colon\wp(\pspace)\to[0,1]$
is called a \emph{probability measure} \cite{1950:kolmogorov}
whenever $Q(A\cup B)=Q(A)+Q(B)$ for all $A$ and $B\subseteq\pspace$
such that $A\cap B=\emptyset$, and $Q(\pspace)=1$.
The set of all probability measures
is denoted by $\setofprobs$.

A function $\upr\colon\wp(\pspace)\to[0,1]$ is called an \emph{upper
probability} \cite{1981:walley:lowuppprobs,1991:walley}. We can
interpret $\upr(A)$ behaviourally as a subject's infimum acceptable
selling price for the gamble that pays $1$ if $A$ obtains, and $0$
otherwise \cite{1961:smith,1991:walley}. The \emph{credal set}
$\credalset$ induced by $\upr$ is defined as the set of probability
measures it dominates,
\begin{equation}\label{eq:credalset}
  \credalset\coloneqq
  \{Q \colon Q\in\setofprobs \;\wedge\; (\forall
  A\subseteq\pspace)(Q(A)\le\upr(A))\}.
\end{equation}
We say that $\upr$ \emph{avoids sure loss} when
its credal set is non-empty.
In this case,
the \emph{natural extension} $\unex$ of $\upr$
is defined as the upper envelope of its credal set,
that is
\begin{equation}\label{eq:unex}
  \unex(A)\coloneqq\max_{Q\in\credalset}Q(A) \ \text{ for every } A\subseteq\pspace.
\end{equation}
An upper probability is called \emph{coherent} if it coincides with
its natural extension, that is, if $\upr(A)=\unex(A)$ for all
$A\subseteq\pspace$. As a consequence, if $\upr$ avoids sure loss
then its natural extension is the greatest coherent upper
probability it dominates.
A coherent upper probability $\upr$ is always sub-additive:
$\upr(A\cup B)\leq\upr(A)+\upr(B)$
for any disjoint subsets $A$ and $B$ of $\pspace$.

The \emph{conjunction} \cite{1982:walley:aggregation}
of two upper probabilities $\upr_1$ and $\upr_2$
is defined as
\begin{equation}
  \upr(A)\coloneqq\min\{\upr_1(A),\upr_2(A)\}  \ \text{ for every } A\subseteq\pspace.
\end{equation}
It embodies the behavioural implications of both $\upr_1$ and
$\upr_2$. Unfortunately, even if both $\upr_1$ and $\upr_2$ are
coherent, the conjunction $\upr$ may not be coherent.  One can check
that the credal set of the conjunction of $\upr_1$ and $\upr_2$ is
the intersection of the credal sets of $\upr_1$ and $\upr_2$
\cite{1982:walley:aggregation}:
\begin{equation}
  \credalset=\credalset_1\cap\credalset_2.
\end{equation}
If $\credalset$ is non-empty, $\upr$ can be made coherent
through its natural extension.

In this paper, we will be interested in coherent upper probabilities
of a very specific form. A function $\pi\colon\pspace\to[0,1]$ is
called a (normalized) \emph{possibility distribution}
\cite{1978:zadeh::possibility,1988:dubois:possibility,1997:decooman:possibility1,1982:giles:semantics}
whenever
\begin{equation}
  \max_{x\in\pspace}\pi(x)=1.
\end{equation}
A possibility distribution $\pi$ induces a \emph{possibility
measure} $\Pi\colon\wp(\pspace)\to[0,1]$ by
\begin{equation}\label{eq:poss-from-dist}
  \Pi(A)\coloneqq\max_{x\in A}\pi(x) \ \text{ for every } A\subseteq\pspace.
\end{equation}
A possibility measure is a coherent upper probability
\cite[p.~37]{1996:walley::uncertaintyinexpertsystems}.

\subsection{Conjunction of Two Possibility Measures}

Consider two possibility distributions $\pi_1$ and $\pi_2$ that
induce possibility measures $\Pi_1$ and $\Pi_2$, with associated
credal sets $\credalset_1$ and $\credalset_2$.
As just mentioned, the conjunction of these
two possibility measures is the upper envelope of
$\credalset=\credalset_1\cap\credalset_2$, and is denoted by
$\unex$. Alternatively, $\unex$ is the most conservative (i.e.
pointwise largest) coherent upper prevision which is dominated by
the upper probability $\upr$ defined by
\begin{equation}
  \upr(A)\coloneqq\min\{\Pi_1(A),\Pi_2(A)\}
\end{equation}
for all events $A\subseteq\pspace$.
Throughout the entire paper,
we will use the symbols $\pi_1$, $\pi_2$, $\Pi_1$, $\Pi_2$,
$\credalset_1$, $\credalset_2$, $\credalset$, $\upr$, and $\unex$,
always as defined in this section.

Note that, in general $\upr$ may not avoid sure loss (in which case
the conjunction does not exist), or may be incoherent (in which case
$\upr$ does not coincide with $\unex$), and even when it is
coherent, it may not be a possibility measure itself. In this paper,
we investigate in detail each of these cases, by providing necessary
and sufficient conditions for $\upr$ to satisfy each of these
properties.

\section{Avoiding sure loss and coherence}\label{sec:asl-coherence}

We begin by investigating under which conditions the upper
probability determined by the conjunction of two possibility
measures avoids sure loss or is coherent. These are the minimal
behavioural conditions established by Walley in \cite{1991:walley}.

\subsection{When does $\upr$ avoid sure loss?}

It is not difficult to show that $\upr$ does not avoid sure loss in
general.

\begin{example}
Let $\pspace=\{1,2\}$ and
\begin{center}
\begin{tabular}{c|cc}
   & 1 & 2 \\\hline
  $\pi_1$ & 1 & 0.3 \\
  $\pi_2$ & 0.5 & 1
\end{tabular}
\end{center}
Then any probability measure
$Q\in\credalset_1\cap\credalset_2$ must satisfy
$Q(\{1\})\leq 0.5$ and $Q(\{2\})\leq 0.3$.
This is incompatible with
$1=Q(\{1,2\})=Q(\{1\})+Q(\{2\})$, and therefore
$\credalset_1\cap\credalset_2=\emptyset$.
\end{example}

The following theorem, proven by Dubois and Prade~\cite[Lemma
5]{1992:dubois::upper}, gives a necessary and sufficient condition
for the upper probability $\upr$ to avoid sure loss:

\begin{theorem}\cite{1992:dubois::upper}\label{th:charac-asl}
$\upr$ avoids sure loss if and only if for all $A\subseteq\pspace$
\begin{equation}
 1 \leq \Pi_1(A)+\Pi_2(A^c).
\end{equation}
\end{theorem}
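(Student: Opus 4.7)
\medskip
\noindent\textbf{Plan.}  My plan is to split into the two implications.  The necessity direction is immediate: for any $\apr \in \credalset_1 \cap \credalset_2$ and any $A \subseteq \pspace$, additivity of $\apr$ gives
\[
  1 \;=\; \apr(A) + \apr(A^c) \;\le\; \Pi_1(A) + \Pi_2(A^c).
\]
All of the real work sits in the sufficiency direction, which I would tackle by contrapositive.

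Suppose $\credalset_1 \cap \credalset_2 = \emptyset$; the goal is to exhibit a single $A \subseteq \pspace$ with $\Pi_1(A) + \Pi_2(A^c) < 1$.  Because $\pspace$ is finite, the non\nobreakdash-existence of $\apr$ in the intersection is the infeasibility of the linear program in the variables $q_x = \apr(\{x\})$ with constraints $q_x \ge 0$, $\sum_x q_x = 1$, and $\sum_{x \in S} q_x \le \Pi_k(S)$ for every $S \subseteq \pspace$ and $k \in \{1,2\}$.  Farkas' lemma then yields non-negative weights $\lambda^k_S$ together with a scalar $\mu$ satisfying
\[
 \sum_{k=1}^{2} \sum_{S \ni x} \lambda^k_S \;\ge\; \mu \text{ for every } x \in \pspace
 \qquad\text{and}\qquad
 \sum_{k=1}^{2} \sum_S \lambda^k_S \, \Pi_k(S) \;<\; \mu.
\]

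The substantive step is to collapse this certificate down to a single event for each of the two possibility measures.  Since each $\credalset_k$ is already cut out by the constraints on the nested chain of level sets of $\pi_k$, I would first restrict attention to weights $\lambda^k_S$ supported on those chains.  Iteratively merging two nested sets carrying positive weight, and using maxitivity $\Pi_k(B \cup B') = \max(\Pi_k(B), \Pi_k(B'))$, I would collapse each chain to a single event.  This should produce events $A, B \subseteq \pspace$ with $1_A(x) + 1_B(x) \ge 1$ for every $x$ (equivalently $B \supseteq A^c$) and $\Pi_1(A) + \Pi_2(B) < 1$; monotonicity of $\Pi_2$ then gives $\Pi_1(A) + \Pi_2(A^c) < 1$, contradicting the hypothesis.

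I expect the merging step to be the main obstacle: collapsing a weighted combination of nested sets into a single set has to be carried out so that the pointwise coverage inequality is preserved while the weighted sum of possibility values is only decreased, and the bookkeeping on the coefficients is delicate.  A potentially cleaner alternative that I would also explore is to work from the start with the random-set (focal-set) representation of a possibility measure---$\credalset_k$ is exactly the set of probabilities obtained by spreading the focal masses $m_i^{(k)} = \alpha_i^{(k)} - \alpha_{i+1}^{(k)}$ over the nested focal sets $B_i^{(k)} = \{x \colon \pi_k(x) \ge \alpha_i^{(k)}\}$---so that $\credalset_1 \cap \credalset_2 \neq \emptyset$ becomes a transportation/feasibility problem whose Hall-type cut conditions should translate directly into the inequalities $\Pi_1(A) + \Pi_2(A^c) \ge 1$.
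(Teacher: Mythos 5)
The paper does not prove this statement at all: it is imported verbatim from Dubois and Prade \cite[Lemma~5]{1992:dubois::upper}, so there is no internal proof to compare yours against. Judged on its own, your necessity direction is complete and correct: for $\apr\in\credalset_1\cap\credalset_2$ one indeed gets $1=\apr(A)+\apr(A^c)\le\Pi_1(A)+\Pi_2(A^c)$.

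The sufficiency direction, however, has a genuine gap exactly where you flag it. The Farkas certificate for infeasibility gives you a \emph{weighted family} of events, and the whole content of the theorem is that this family can be replaced by a single pair $(A,A^c)$. Your proposed merge of two nested weighted sets $S\subseteq S'$ does not go through in either of the two obvious ways: replacing $\lambda 1_S+\lambda' 1_{S'}$ by $(\lambda+\lambda')1_{S'}$ preserves the pointwise coverage bound but \emph{increases} the dual objective $\lambda\Pi_k(S)+\lambda'\Pi_k(S')$ to $(\lambda+\lambda')\Pi_k(S')$, which may destroy the strict inequality against $\mu$; replacing it by $(\lambda+\lambda')1_S$ decreases the objective but loses coverage. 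So the ``delicate bookkeeping'' is not a technicality to be deferred --- it is the theorem. Your second, parenthetical route is the one that actually closes: since the level sets of each $\pi_k$ are nested, $\credalset_1\cap\credalset_2\neq\emptyset$ reduces to feasibility of a transportation problem between the two consonant focal-set decompositions, whose Gale--Hoffman/Hall cut conditions, after using that the neighbourhood of an up-set of nested focal sets of $\pi_1$ is determined by its largest member $F$, read precisely $1-\Pi_1(F^c)\le\Pi_2(F)$, i.e.\ $\Pi_1(A)+\Pi_2(A^c)\ge 1$ for $A=F^c$; monotonicity then handles arbitrary $A$. That argument (essentially Chateauneuf's) should be written out in place of the Farkas sketch, including the verification that membership of $Q$ in both credal sets is equivalent to the existence of a compatible joint mass assignment.
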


This result was also established for belief functions by Chateauneuf
in \cite{1994:chateauneuf::combination}, who refers to the non-empty
intersection of the credal sets as the \emph{compatibility} of their
associated imprecise probability models; see also
\cite{1987:decampos}. Other characterisations of avoiding sure loss
for the conjunction of possibility measures can be found in
\cite[Propositions~6 and~7]{1992:dubois::upper}.

\subsection{When is $\upr$ coherent?}

Recall that $\upr$ is coherent if and only if it coincides with its
natural extension $\unex$, that is, if and only if it coincides with
the upper envelope of its credal set $\credalset$, as in
Eq.~\eqref{eq:unex}. The conjunction $\upr$ can be incoherent even
if it avoids sure loss, as the following example shows:

\begin{example}
Let $\pspace=\{1,2,3\}$ and
\begin{center}
\begin{tabular}{c|ccc}
   & 1 & 2 & 3 \\\hline
  $\pi_1$ & 1 & 0.3 & 0.5 \\
  $\pi_2$ & 0.5 & 1 & 0.7
\end{tabular}
\end{center}
Then every probability
measure $Q\in\credalset=\credalset_1\cap\credalset_2$ must satisfy
$Q(A)\le\upr(A)=\min\{\Pi_1(A),\Pi_2(A)\}$ for all $A\subseteq\pspace$.
In particular,
\begin{align}
 Q(\{1\})&\leq 0.5, & Q(\{2\})&\leq 0.3, & Q(\{3\})&\leq 0.5,\\
 Q(\{1,2\})&\leq 1, & Q(\{1,3\})&\leq 0.7, & Q(\{2,3\})&\leq 0.5.
\end{align}
Since $Q(\{1\})\leq 0.5$ and $Q(\{2\})\leq 0.3$ imply that
$Q(\{1,2\})\leq 0.8$, but on the other hand $\upr(\{1,2\})=1$, it
follows that $\upr$ is incoherent. Still, $\upr$ avoids sure loss
because $\credalset$ contains the probability measure $Q$ with
$Q(\{1\})=0.5$, $Q(\{2\})=0.3$, and $Q(\{3\})=0.2$.
\end{example}

Given a credal set $\credalset$, the upper envelope of the set of
expectation operators with respect to the elements of $\credalset$
is called a \emph{coherent upper prevision}. The conjunction of two
coherent upper previsions with respective credal sets $\credalset_1$
and $\credalset_2$ is coherent if and only if
$\credalset_1\cup\credalset_2$ is convex
\cite[Theorem~6]{2013:zaffalon::probtime}. From the proof of
\cite[Theorem~6,
(a)$\Rightarrow$(b)$\Rightarrow$(c)]{2013:zaffalon::probtime}, one
can easily see that convexity of $\credalset_1\cup\credalset_2$ is
still sufficient (but not necessary) for the conjunction of two
upper probabilities on events to be coherent. This leads immediately
to the following sufficient condition for the coherence of $\upr$:

\begin{proposition}\label{pr:charac-coher-prevs}
$\upr$ is coherent if $\credalset_1\cup\credalset_2$ is convex.
\end{proposition}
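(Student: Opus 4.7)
The plan is to establish the one nontrivial inequality $\unex(A)\ge\upr(A)$ for every $A\subseteq\pspace$, since $\unex\le\upr$ holds automatically by definition of the credal set. Fixing $A$ and relabelling so that $\upr(A)=\Pi_1(A)\le\Pi_2(A)$, I would aim to exhibit a probability $Q^{*}\in\credalset_1\cap\credalset_2$ with $Q^{*}(A)\ge\Pi_1(A)$; this suffices, as $\unex(A)\ge Q^{*}(A)\ge\upr(A)$ then combines with the reverse inequality to give coherence on $A$.

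The natural candidate for $Q^{*}$ should be a convex combination of two ``extremal witnesses''. Since $\Pi_1$ and $\Pi_2$ are themselves coherent upper probabilities, there exist $Q_i\in\credalset_i$ with $Q_i(A)=\Pi_i(A)$. By the convexity hypothesis, the entire segment $\{(1-t)Q_1+tQ_2:t\in[0,1]\}$ lies in $\credalset_1\cup\credalset_2$, so every point on it belongs to at least one of the two credal sets. Any point $Q^{*}=(1-t)Q_1+tQ_2$ that happens to lie in \emph{both} automatically satisfies
\[
Q^{*}(A)=(1-t)\Pi_1(A)+t\Pi_2(A)\ge\Pi_1(A),
\]
because $\Pi_2(A)\ge\Pi_1(A)$, which is exactly what we need.

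The crux is thus to show that the segment meets $\credalset_1\cap\credalset_2$, and I would handle this by a connectedness argument. The sets $T_i=\{t\in[0,1]:(1-t)Q_1+tQ_2\in\credalset_i\}$ are closed in $[0,1]$ (each $\credalset_i$ is the intersection of finitely many closed half-spaces with the probability simplex, hence closed), their union is all of $[0,1]$ by convexity of $\credalset_1\cup\credalset_2$, and $0\in T_1$, $1\in T_2$, so both are non-empty. Since $[0,1]$ is connected, two non-empty closed sets covering it cannot be disjoint, so $T_1\cap T_2\ne\emptyset$; any $t^{*}$ in this intersection yields the desired $Q^{*}$.

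The only real obstacle is this topological step, and it is mild; the rest is mechanical bookkeeping. One could alternatively invoke directly the relevant implication of \cite[Theorem~6]{2013:zaffalon::probtime} as suggested by the authors, but I find the self-contained connectedness argument cleaner and more transparent in the event-only setting treated here.
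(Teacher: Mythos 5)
Your proof is correct. The paper itself does not give a self-contained argument for this proposition: it simply points to the implication (a)$\Rightarrow$(b)$\Rightarrow$(c) in \cite[Theorem~6]{2013:zaffalon::probtime}. The mechanism you use, however, is exactly the one the authors spell out later when proving the stronger Proposition~\ref{pr:convexity-2alt} ($2$-alternatingness): pick witnesses $Q_i\in\credalset_i$ attaining $Q_i(A)=\Pi_i(A)$, and use convexity of $\credalset_1\cup\credalset_2$ to place some convex combination $\alpha Q_1+(1-\alpha)Q_2$ in $\credalset_1\cap\credalset_2$, whereupon the combination dominates $\min\{\Pi_1(A),\Pi_2(A)\}$ while being capped by $\upr(A)$. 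What you add is a direct proof of the one step the paper outsources, namely that the segment from $Q_1$ to $Q_2$ actually meets $\credalset_1\cap\credalset_2$: your covering of $[0,1]$ by the two closed sets $T_1$ and $T_2$ and the appeal to connectedness is a clean, elementary justification (closedness of each $\credalset_i$ as a finite intersection of half-spaces with the simplex is all that is needed). The only difference in scope is that you treat a single event $A$ at a time, which suffices for coherence, whereas the paper's Proposition~\ref{pr:convexity-2alt} runs the same argument simultaneously for a nested pair $A\subseteq B$ to extract the stronger $2$-alternating conclusion; your argument extends verbatim to that setting if you also require $Q_i(B)=\Pi_i(B)$ of the witnesses. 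In short: same route as the paper's underlying argument, but self-contained where the paper cites.
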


The convexity of $\credalset_1\cup\credalset_2$ can
be checked in polynomial time \cite{2001:bemporad::convexity}.
The following example shows that convexity of
$\credalset_1\cup\credalset_2$ is not necessary for $\upr$ to be
coherent. It simultaneously shows that $\upr$ does not need to be a
possibility measure, even if it is coherent.

\begin{example}\label{ex:coh-not-possib}
Let $\pspace=\{1,2,3\}$ and
\begin{center}
\begin{tabular}{c|ccc}
   & 1 & 2 & 3 \\\hline
  $\pi_1$ & 1 & 0.5 & 0.5 \\
  $\pi_2$ & 0.5 & 1 & 0
\end{tabular}
\end{center}
Then $\upr$ is the probability measure with probability mass
function $(0.5,0.5,0)$. This is not a
possibility measure, but it is a coherent upper probability (because
it is trivially the upper envelope of itself).

Also,
$\credalset_1\cup\credalset_2$ is not convex.
Using vector notation for probability mass functions,
we have that
\begin{equation}
 (0.5,0.25,0.25)\in\credalset_1
 \text{ and } (0.25,0.75,0)\in\credalset_2
\end{equation}
but their average $(0.375,0.5,0.125)$ does not belong to
$\credalset_1\cup\credalset_2$, because
\begin{equation}
Q(\{2,3\})=0.625>0.5=\Pi_1(\{2,3\})
\text{ and }
Q(\{3\})=0.125>0=\Pi_2(\{3\}).
\end{equation}
Indeed, that $Q(A) > \Pi_i(A)$ for some event $A$ implies that $Q \not \in \credalset_i$.
\end{example}

Regarding \cite[Theorem~6]{2013:zaffalon::probtime}, let $\upr_1$
and $\upr_2$ denote the upper envelopes of the sets of expectation
operators with respect to the credal sets $\credalset_1$ and
$\credalset_2$ in this example. Then the conjunction
$\min\{\upr_1,\upr_2\}$ is not equal to the expectation operator
associated with $(0.5,0.5,0)=\credalset_1\cap\credalset_2$. To see
this, consider the gamble $f$ given by $f(1)=1$, $f(2)=2$, and
$f(3)=3$. For this gamble, $Q(f)=1.5<2=\min\{\upr_1(f),\upr_2(f)\}$.

Next we show that the minimum $\upr$ of two possibility measures can
be a coherent upper probability that is not even $2$-alternating (and
thus not a possibility measure, either).
 Recall that $\upr$ is $2$-alternating if $\upr(A) + \upr(B) \leq \upr(A \cup B) + \upr(A \cap B)$
 for any $A,B \subseteq \pspace$.

\begin{example}\label{ex:coh-not-2alt}
Let $\pspace=\{1,2,3,4\}$ and
\begin{center}
\begin{tabular}{c|cccc}
   & 1 & 2 & 3 & 4 \\\hline
  $\pi_1$ & 0.3 & 0.4 & 0.6 & 1 \\
  $\pi_2$ & 0.3 & 0.6 & 0.4 & 1
\end{tabular}
\end{center}
It can be shown by linear programming that $\upr$ is coherent.
However, it is not $2$-alternating: for $A=\{1,2\}$ and $B=\{1,3\}$,
it holds that
\begin{align}
 \upr(A\cup B)+\upr(A\cap B)
 &=\upr(\{1,2,3\})+\upr(\{1\})=0.6+0.3=0.9\\
 &>\upr(A)+\upr(B)=\upr(\{1,2\})+\upr(\{1,3\})=0.8.
\end{align}
\end{example}

The following result is rather surprising:
we can show that the conjunction $\upr$ of two possibility measures
is $2$-alternating when $\credalset_1\cup\credalset_2$ is convex;
it strengthens Proposition~\ref{pr:charac-coher-prevs}.

\begin{proposition}\label{pr:convexity-2alt}
$\upr$ is $2$-alternating if $\credalset_1\cup\credalset_2$ is convex.
\end{proposition}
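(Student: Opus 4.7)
The plan is to work with upper expectations on gambles rather than with the upper probability on events alone, exploiting the algebraic identity $1_A + 1_B = 1_{A\cup B} + 1_{A\cap B}$. Let $\overline{E}_1$, $\overline{E}_2$, and $\overline{E}$ denote the upper expectation operators associated with $\credalset_1$, $\credalset_2$, and $\credalset_1 \cap \credalset_2$, respectively.

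The crucial leverage provided by the hypothesis is that, by the full statement of \cite[Theorem~6]{2013:zaffalon::probtime} (of which Proposition~\ref{pr:charac-coher-prevs} extracts only the implication for events), the convexity of $\credalset_1\cup\credalset_2$ yields the equality $\overline{E}(g) = \min\{\overline{E}_1(g), \overline{E}_2(g)\}$ for every gamble $g$, not only for indicators. I would apply this to the gamble $g = 1_A + 1_B = 1_{A\cup B} + 1_{A\cap B}$. Since $A\cap B\subseteq A\cup B$, the gambles $1_{A\cup B}$ and $1_{A\cap B}$ are comonotone. Because each $\Pi_i$ is a possibility measure and therefore $2$-alternating, its upper expectation $\overline{E}_i$ is the Choquet integral with respect to $\Pi_i$, which is comonotone additive; hence
\begin{equation*}
  \overline{E}_i(g) = \Pi_i(A\cup B) + \Pi_i(A\cap B),
\end{equation*}
and consequently
\begin{equation*}
  \overline{E}(g) = \min_{i\in\{1,2\}}\bigl(\Pi_i(A\cup B) + \Pi_i(A\cap B)\bigr).
\end{equation*}

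I would conclude by sandwiching this common quantity between the two sides of the desired inequality. The elementary bound $\min_i x_i + \min_i y_i \leq \min_i(x_i + y_i)$ applied to $x_i = \Pi_i(A\cup B)$ and $y_i = \Pi_i(A\cap B)$ gives $\upr(A\cup B) + \upr(A\cap B) \leq \overline{E}(g)$, while sub-additivity of the coherent upper prevision $\overline{E}$ yields $\overline{E}(g) = \overline{E}(1_A + 1_B) \leq \overline{E}(1_A) + \overline{E}(1_B) = \upr(A) + \upr(B)$. Chaining these produces $\upr(A\cup B) + \upr(A\cap B) \leq \upr(A) + \upr(B)$. The main subtlety is choosing to lift the argument from events to the single gamble $1_A + 1_B$: a direct case analysis on which of $\Pi_1$ or $\Pi_2$ attains the minimum at each of $A$, $B$, $A\cup B$, $A\cap B$ produces mixed configurations (with differing arg-mins between the union and the intersection) where convexity is awkward to exploit, whereas Zaffalon--Miranda's equality on gambles combined with comonotone additivity and the $\min$-sub-additivity inequality bypasses the case analysis altogether.
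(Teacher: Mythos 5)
Your proof is correct, but it takes a genuinely different route from the paper's. The paper works entirely at the level of events and probability measures: it invokes Walley's characterisation (his Corollary~6.4) that $\upr$ is $2$-alternating provided that for every nested pair $A\subseteq B$ some $Q\in\credalset$ attains $Q(A)=\upr(A)$ and $Q(B)=\upr(B)$; it then takes $Q_1\in\credalset_1$ and $Q_2\in\credalset_2$ attaining the corresponding bounds for $\Pi_1$ and $\Pi_2$ (which exist because each $\Pi_i$ is $2$-alternating), and uses convexity of $\credalset_1\cup\credalset_2$ via \cite[Theorem~6]{2013:zaffalon::probtime} only to guarantee that some convex combination $\alpha Q_1+(1-\alpha)Q_2$ lands in $\credalset$; a squeeze between $Q\le\upr$ and $Q(A)\ge\min\{Q_1(A),Q_2(A)\}=\upr(A)$ finishes the argument. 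You instead lift the problem to gambles, use the full gamble-level content of the same Theorem~6 (that convexity makes $\min\{\overline{E}_1,\overline{E}_2\}$ coincide with the upper envelope of $\credalset_1\cap\credalset_2$ on \emph{all} gambles, not just indicators), and exploit comonotone additivity of the Choquet integral for each $\Pi_i$ together with the identity $1_A+1_B=1_{A\cup B}+1_{A\cap B}$ and the elementary $\min$-superadditivity and sub-additivity inequalities. The trade-off: the paper's proof needs less machinery (no Choquet integrals, no upper previsions on gambles, and only the existence of a suitable $\alpha$ from Theorem~6), whereas yours requires the stronger gamble-level reading of Theorem~6 and the Choquet representation of $\overline{E}_i$, but in exchange it dispenses with Walley's nested-pair criterion and makes completely explicit that the only role of the possibilistic structure is the $2$-alternation of each $\Pi_i$; both arguments therefore generalise to the conjunction of arbitrary $2$-alternating coherent upper probabilities. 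One small point worth flagging if you write this up: your final step silently uses $\overline{E}(1_A)=\upr(A)$, which is again the gamble-level equality (or equivalently the coherence of $\upr$ from Proposition~\ref{pr:charac-coher-prevs}); without convexity, $\overline{E}(1_A)$ would only be $\unex(A)\le\upr(A)$, and the chain would break in the direction you need.
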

\begin{proof}
By \cite[Corollary~6.4]{1981:walley:lowuppprobs}, to show that
$\upr$ is $2$-alternating, it suffices to establish that for every
$A\subseteq B\subseteq \pspace$ there is a $Q\in\credalset$ such
that $Q(A)=\upr(A)$ and $Q(B)=\upr(B)$.

Consider $A\subseteq B\subseteq\pspace$.
Because $\Pi_1$ is a possibility measure and
therefore $2$-alternating, there is a $Q_1\in\credalset_1$ such
that $Q_1(A)=\Pi_1(A)$ and $Q_1(B)=\Pi_1(B)$.
Similarly, there is a $Q_2\in\credalset_2$ such that
$Q_2(A)=\Pi_2(A)$ and $Q_2(B)=\Pi_2(B)$. Now, since
$\credalset_1\cup\credalset_2$ is convex, it follows from
\cite[Theorem~6]{2013:zaffalon::probtime} that there is an
$\alpha\in[0,1]$ such that $Q\coloneqq\alpha Q_1+(1-\alpha)Q_2$
belongs to $\credalset_1\cap\credalset_2=\credalset$, and as a
consequence $Q$ is dominated by $\upr$:
\begin{align}
  Q(A)&\le\upr(A) & Q(B)&\le\upr(B).
\end{align}
But, by construction of $Q$, we also have that
that
\begin{align}
 \nonumber
 Q(A)&=\alpha Q_1(A)+(1-\alpha) Q_2(A) \\
 &\geq\min\{Q_1(A),Q_2(A)\}=\min\{\Pi_1(A),\Pi_2(A)\}=\upr(A)
 \label{eq:2alt:proof:qa}
 \\
 \nonumber
 Q(B)&=\alpha Q_1(B)+(1-\alpha) Q_2(B) \\
 &\geq\min\{Q_1(B),Q_2(B)\}=\min\{\Pi_1(B),\Pi_2(B)\}=\upr(B).
 \label{eq:2alt:proof:qb}
\end{align}
Concluding, $Q(A)=\upr(A)$ and $Q(B)=\upr(B)$,
so $\upr$ is $2$-alternating.
\end{proof}

To see that the convexity of $\credalset_1\cup\credalset_2$ does not
guarantee that $\upr$ is a possibility measure, consider the
following example:

\begin{example}\label{ex:convex-not-possib}
Let $\pspace=\{1,2\}$ and
\begin{center}
  \begin{tabular}{c|cc}
    & 1 & 2 \\
    \hline
    $\pi_1$ & 0.5 & 1 \\
    $\pi_2$ & 1 & 0.5
  \end{tabular}
\end{center}
Then $\upr$ is the probability measure
determined by the probability mass function $(0.5,0.5)$,
which is obviously not a possibility measure.
However,
$\credalset_1$ is the set of all probability measures
$Q$ for which $Q(\{x_1\})\le 0.5$,
and
$\credalset_2$ is the set of all probability measures
$Q$ for which $Q(\{x_1\})\ge 0.5$,
so
$\credalset_1\cup\credalset_2$
is the set of all probability measures on $\pspace$,
which is convex.
\end{example}

From the proof of Proposition~\ref{pr:convexity-2alt}, we see that
the convexity of $\credalset_1\cup\credalset_2$ is actually a really
strong requirement.
Specifically, it requires that, for all $A\subseteq B$,
\begin{gather}
  \Pi_1(A)<\Pi_2(A)\implies\Pi_1(B)\le \Pi_2(B)
  \\
  \Pi_1(A)>\Pi_2(A)\implies\Pi_1(B)\ge \Pi_2(B)
  \\
  \Pi_1(B)<\Pi_2(B)\implies\Pi_1(A)\le \Pi_2(A)
  \\
  \Pi_1(B)>\Pi_2(B)\implies\Pi_1(A)\ge \Pi_2(A)
\end{gather}
Indeed,
if $\credalset_1\cup\credalset_2$ is convex,
following the proof of Proposition~\ref{pr:convexity-2alt},
taking Eqs.~\eqref{eq:2alt:proof:qa} and~\eqref{eq:2alt:proof:qb}
and noting that $Q_i(A)=\Pi_i(A)$ and $Q_i(B)=\Pi_i(B)$,
we know that there is an $\alpha\in[0,1]$ such that
\begin{align}
  \alpha \Pi_1(A)+(1-\alpha) \Pi_2(A)
  &=\min\{\Pi_1(A),\Pi_2(A)\}
  \\
  \alpha \Pi_1(B)+(1-\alpha) \Pi_2(B)
  &=\min\{\Pi_1(B),\Pi_2(B)\}
\end{align}
So, if $\Pi_1(A)<\Pi_2(A)$,
then it must be that $\alpha=1$
by the first equality,
and therefore also $\Pi_1(B)\le\Pi_2(B)$
by the second equality.
The other cases follow similarly.

These implications give us a simple way to check
for typical violations of convexity of
$\credalset_1\cup\credalset_2$, through the following corollary.

\begin{corollary}\label{cor:convex-is-pretty-strong}
  If $\credalset_1\cup\credalset_2$ is convex,
  then for all subsets $A$, $B$, and $C$ of $\pspace$
  such that
  $\Pi_1(A)<\Pi_2(A)$, $\Pi_1(B)>\Pi_2(B)$,
  and $C\supseteq A\cup B$,
  we have that $\Pi_1(C)=\Pi_2(C)$.
\end{corollary}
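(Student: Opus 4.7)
The plan is to apply the four implications that the preceding paragraph derived from the convexity of $\credalset_1\cup\credalset_2$ directly to the set $C$, using the fact that both $A$ and $B$ are subsets of $C$.

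More concretely, I would first observe that since $C \supseteq A \cup B$, we have both $A \subseteq C$ and $B \subseteq C$. From $A \subseteq C$ together with the hypothesis $\Pi_1(A) < \Pi_2(A)$, the first of the four implications displayed just before the corollary yields $\Pi_1(C) \le \Pi_2(C)$. Symmetrically, from $B \subseteq C$ together with $\Pi_1(B) > \Pi_2(B)$, the second implication yields $\Pi_1(C) \ge \Pi_2(C)$. Combining these two inequalities gives $\Pi_1(C) = \Pi_2(C)$, as required.

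There is essentially no obstacle here: the corollary is a direct specialisation of the implications already established. The only minor point worth flagging is that one is applying the convexity-derived implications in the $A \subseteq B$ form with $A$ replaced by $A$ (resp.\ $B$) and $B$ replaced by $C$, which is legitimate precisely because $A \cup B \subseteq C$. No further case analysis is needed, and no properties of possibility measures beyond monotonicity are invoked.
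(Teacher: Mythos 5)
Your proof is correct and is exactly the argument the paper intends: the corollary is stated without an explicit proof precisely because it follows by instantiating the first convexity-derived implication with the nested pair $(A,C)$ and the second with $(B,C)$, then combining the two resulting inequalities. Nothing is missing.
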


In a way, Example~\ref{ex:convex-not-possib} is thus showing a very
peculiar situation (corresponding to $A=\{x_1\}$, $B=\{x_2\}$, and
$C=\{x_1,x_2\}$ in Corollary~\ref{cor:convex-is-pretty-strong}).

One of the advantages of possibility measures over other imprecise
probability models is their computational simplicity, that follows
from Eq.~\eqref{eq:poss-from-dist}: possibility measures are
uniquely determined by their restriction to singletons, called their
possibility distributions. Moreover, possibility distributions
connect possibility measures with fuzzy sets
\cite{1978:zadeh::possibility}. The minimum of two possibility
distributions was defined by Zadeh as one instance of \emph{fuzzy
set intersection}. However, the connection between imprecise
probabilities and fuzzy sets by means of possibility measures does
not hold under the conjunction operator we are considering in this
paper, in the sense that, as we have seen in
Example~\ref{ex:coh-not-2alt}, coherent conjunctions of possibility
measures need not be determined by their restrictions to singletons.
One might wonder if these restrictions suffice to characterise the
coherence of $\upr$. Clearly, a necessary condition for the
coherence of $\upr$ is that for every $x\in\pspace$ there is some
$Q\in\credalset_1\cap\credalset_2$ such that $Q(\{x\})=\upr(\{x\})$.
However, this condition is not sufficient, as the following example
shows.

\begin{example}
Let $\pspace=\{1,2,3\}$ and
\begin{center}
\begin{tabular}{c|ccc}
   & 1 & 2 & 3 \\\hline
  $\pi_1$ & 0.8 & 0.2 & 1 \\
  $\pi_2$ & 0.2 & 0.9 & 1
\end{tabular}
\end{center}
Then $(0,0,1)$ belongs to $\credalset_1\cap\credalset_2$, so $\upr$
avoids sure loss. However, it is not coherent because
$\upr(\{1,2\})=0.8>\upr(\{1\})+\upr(\{2\})=0.4$.

One can easily check that both
$(0.2,0.2,0.6)$ and $(0,0,1)$ are in $\credalset_1\cap\credalset_2$,
and that
$(0.2,0.2,0.6)$ achieves the upper bound for $\{1\}$ and $\{2\}$,
and $(0,0,1)$ achieves the upper bound for $\{3\}$.
We have thereby shown that
$\upr(\{x\})
=\max_{Q\in\credalset_1\cap\credalset_2}Q(\{x\})$ for all $x\in\pspace$.
\end{example}

The following graph summarises the implications between conditions established in this
section:

\begin{tikzpicture}
  \matrix (m) [matrix of math nodes,row sep=2em,column sep=2em,minimum width=2em]
  {
      & \upr \text{ possibility } & \\
     \credalset_1\cup\credalset_2 \text{ convex }
     & \upr \text{ $2$-alternating }
     &   \upr \text{ coherent }\\
     & &
     {%
     \begin{array}{c}
     \forall x\in\pspace\colon \\
     \upr(\{x\})=\max\limits_{Q\in\credalset} Q(\{x\})
     \end{array}} \\
     };
  \path[-stealth]
    (m-1-2) edge [double] (m-2-2)
    (m-2-1) edge [double] (m-2-2)
    (m-2-2) edge [double] (m-2-3)
    (m-2-3) edge [double] (m-3-3);
\end{tikzpicture}

The examples in this section show that the converses of these
implications do not hold in general. To see that there is no
implication between the convexity of $\credalset_1\cup\credalset_2$
and $\upr$ being a possibility measure, consider
Example~\ref{ex:convex-not-possib} above as well as
Example~\ref{ex:poss-not-ordered} later on.

\section{When is $\upr$ a possibility measure?}
\label{sec:upr:possib}

Next, we are going to study under which conditions the conjunction
$\upr$
of two possibility measures is again a possibility measure. We shall
begin by providing a simple sufficient (yet not necessary)
condition,
followed by more advanced necessary and sufficient conditions.
One of them will establish a link with game theory,
along with a corresponding method for graphical verification.

\subsection{Sufficient conditions} \label{sec:sufficient-cond}

Clearly, $\upr$ is a possibility measure (and
therefore also coherent) when $\pi_1(x)\leq\pi_2(x)$ for all
$x\in\pspace$, or equivalently, when $\Pi_1(A)\leq\Pi_2(A)$ for all
$A\in\pspace$, since then
$\credalset(\upr)=\credalset_1\cap\credalset_2=\credalset_1$. This
condition means that the possibility measure $\Pi_1$ is more
\emph{specific} \cite{1983:yager,1986:dubois} than $\Pi_2$. However,
this is not the only case in which the conjunction of possibility
measures is again a possibility measure, as the following example
shows.

\begin{example}\label{ex:poss-not-ordered}
Let $\pspace=\{1,2,3\}$ and
\begin{center}
\begin{tabular}{c|ccc}
   & 1 & 2 & 3 \\\hline
  $\pi_1$ & 1 & 0.5 & 0.7 \\
  $\pi_2$ & 1 & 0.6 & 0.6
\end{tabular}
\end{center}
Then
\begin{align}
 &\upr(\{1\})=1,\ \upr(\{2\})=0.5,\ \upr(\{3\})=0.6 \\
 &\upr(\{1,2\})=1,\ \upr(\{1,3\})=1,\ \upr(\{2,3\})=0.6.
\end{align}
Thus, $\upr$ is a possibility measure, even though
$\pi_1(2)<\pi_2(2)$ and $\pi_1(3)>\pi_2(3)$. We can also note that,
in this case, $\credalset_1\cup\credalset_2$ is not convex:
$\Pi_1(\{2\})<\Pi_2(\{2\})$, $\Pi_1(\{3\})>\Pi_2(\{3\})$, and yet
$\Pi_1(\{2,3\})=0.7\neq 0.6=\Pi_2(\{2,3\})$; now use
Corollary~\ref{cor:convex-is-pretty-strong}.
\end{example}

In the example, the possibility distributions $\pi_1$ and $\pi_2$
follow the same order, in the sense that
$\pi_i(2)\leq\pi_i(3)\leq\pi_i(1)$ for both $i=1$ and $i=2$. This
ordering condition turns out to be sufficient for the conjunction of
the two possibility measures to be again a possibility measure:

\begin{theorem}\label{th:order}
$\upr$ is a possibility measure
whenever
there is an ordering $x_1$, \dots, $x_n$
of the elements of $\pspace$ such that
for both $i=1$ and $i=2$ we have that
\begin{equation}\label{eq:thm-ordering}
  \pi_i(x_1)\leq\pi_i(x_2)\leq\dots\leq\pi_i(x_n).
\end{equation}
\end{theorem}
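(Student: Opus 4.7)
The plan is to show that the function $\pi(x)\coloneqq\min\{\pi_1(x),\pi_2(x)\}$ is itself a possibility distribution and that its induced possibility measure equals $\upr$. The whole argument hinges on one simple observation: when both $\pi_1$ and $\pi_2$ are non-decreasing along a common ordering $x_1\le\dots\le x_n$, the maximum of each $\pi_i$ over any set $A$ is attained at the \emph{same} element of $A$, namely the one with the largest index.

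First I would verify that $\pi$ is a (normalized) possibility distribution. Since $\Pi_1$ and $\Pi_2$ are normalized and the elements are listed in non-decreasing order, we must have $\pi_1(x_n)=\pi_2(x_n)=1$, hence $\pi(x_n)=1$. Moreover, $\pi$ inherits the monotonicity along the ordering from $\pi_1$ and $\pi_2$, since the pointwise minimum of two non-decreasing sequences is non-decreasing.

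Next, for an arbitrary $A\subseteq\pspace$, let $k=\max\{j\colon x_j\in A\}$. By the monotonicity assumption~\eqref{eq:thm-ordering} applied to each $\pi_i$, we have
\begin{equation}
\Pi_i(A)=\max_{x_j\in A}\pi_i(x_j)=\pi_i(x_k) \quad \text{for } i=1,2.
\end{equation}
Therefore
\begin{equation}
\upr(A)=\min\{\Pi_1(A),\Pi_2(A)\}=\min\{\pi_1(x_k),\pi_2(x_k)\}=\pi(x_k).
\end{equation}
Because $\pi$ is also non-decreasing along the ordering, $\pi(x_k)=\max_{x_j\in A}\pi(x_j)$, so $\upr(A)$ coincides with the possibility measure induced by $\pi$ via Eq.~\eqref{eq:poss-from-dist}.

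There is no real obstacle here; the only subtle point is recognising that the shared ordering forces the maxima in $\Pi_1(A)$ and $\Pi_2(A)$ to be realised at the same element of $A$, which is precisely what allows the $\min$ to commute with the $\max$. Without a common ordering, the two maxima might be attained at different elements and the identity $\min\{\max_A\pi_1,\max_A\pi_2\}=\max_A\min\{\pi_1,\pi_2\}$ would fail, as Example~\ref{ex:convex-not-possib} already illustrates.
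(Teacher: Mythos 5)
Your proof is correct and follows essentially the same route as the paper's: both identify the maximal-index element $x_k$ of $A$ in the common ordering, note that $\Pi_1(A)$ and $\Pi_2(A)$ are both attained there so that $\min$ and $\max$ commute, and conclude that $\upr(A)=\max_{x\in A}\upr(\{x\})$. Your explicit check that $\pi(x_n)=1$ is a small addition the paper leaves implicit (it follows from $\upr(\pspace)=1$), but the argument is otherwise identical.
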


\begin{proof}
Consider $A\subseteq\pspace$ and let
$j(A)\coloneqq\max\{j\in\{1,\dots,n\}\colon x_j\in A\}$.
Then, by Eq.~\eqref{eq:thm-ordering}, $\Pi_i(A)=\pi_i(x_{j(A)})$, and so
\begin{align}
\upr(A)
&=\min\{\Pi_1(A),\Pi_2(A)\}
=\min\{\pi_1(x_{j(A)}),\pi_2(x_{j(A)})\}
\\
&=\upr(\{x_{j(A)}\})
=\max_{x_i\in A}\upr(\{x_i\})
\end{align}
where the last equality follows from
\begin{equation}
  \upr(\{x_1\})\leq\upr(\{x_2\})\leq\dots\leq\upr(\{x_n\}),
\end{equation}
which also follows from Eq.~\eqref{eq:thm-ordering}.
Thus,
$\upr$ is a possibility measure.
\end{proof}

Equivalently, this means that $\upr$ is a possibility measure when
$\pi_1$ and $\pi_2$ are comonotone functions. To see that this
sufficient condition is not necessary, simply note that it may not
hold when $\pi_1\leq\pi_2$:

\begin{example}
Let $\pspace=\{1,2,3\}$ and
\begin{center}
\begin{tabular}{c|ccc}
   & 1 & 2 & 3 \\\hline
  $\pi_1$ & 1 & 0.9 & 0.8 \\
  $\pi_2$ & 1 & 0.5 & 0.6
\end{tabular}
\end{center}
Then $\Pi_2\leq\Pi_1$, so $\upr=\min\{\Pi_1,\Pi_2\}=\Pi_2$ is a
possibility measure. However, $\pi_1$ and $\pi_2$ are not comonotone
because
$\pi_1(2)>\pi_1(3)$ and $\pi_2(2)<\pi_2(3)$.
\end{example}

\subsection{Sufficient and necessary conditions}

Next we give a necessary and sufficient condition for $\upr$
to be a possibility measure. It will allow us to make a link with
game theory.

\begin{theorem}\label{thm:minmax}
  $\upr$ is a possibility measure $\Pi$ if and only if
  \begin{equation}\label{eq:thm:minmax}
    \min\left\{\max_{x\in A}\pi_1(x),\max_{x\in A}\pi_2(x)\right\}
    =
    \max_{x\in A}\min\{\pi_1(x),\pi_2(x)\}
  \end{equation}
  for all non-empty $A\subseteq\pspace$. In such a case, $\unex$ coincides with
  $\upr$, and whence, $\unex$ is a possibility measure as well.
\end{theorem}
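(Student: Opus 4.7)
The plan is to observe that everything reduces to the identification of the candidate possibility distribution for $\upr$. By the very definition of $\upr$ and Eq.~\eqref{eq:poss-from-dist}, for any $A\subseteq\pspace$ we have
\begin{equation*}
\upr(A)=\min\{\Pi_1(A),\Pi_2(A)\}=\min\left\{\max_{x\in A}\pi_1(x),\max_{x\in A}\pi_2(x)\right\},
\end{equation*}
so \eqref{eq:thm:minmax} is literally the statement $\upr(A)=\max_{x\in A}\min\{\pi_1(x),\pi_2(x)\}$. The theorem therefore says: $\upr$ is a possibility measure if and only if it equals the possibility measure generated by $\pi(x)\coloneqq\min\{\pi_1(x),\pi_2(x)\}$.

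For the forward direction, I would argue that if $\upr$ is a possibility measure, then it must be the one induced by its restriction to singletons, via Eq.~\eqref{eq:poss-from-dist}. But that restriction is
\begin{equation*}
\upr(\{x\})=\min\{\Pi_1(\{x\}),\Pi_2(\{x\})\}=\min\{\pi_1(x),\pi_2(x)\},
\end{equation*}
so $\upr(A)=\max_{x\in A}\min\{\pi_1(x),\pi_2(x)\}$, which is exactly \eqref{eq:thm:minmax}. For the converse, assuming \eqref{eq:thm:minmax}, I would define $\pi$ as above and verify it is a genuine possibility distribution: applying the identity to $A=\pspace$ and using that $\Pi_1(\pspace)=\Pi_2(\pspace)=1$ gives $\max_{x\in\pspace}\pi(x)=1$, so $\pi$ is normalised. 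Then $\upr(A)=\max_{x\in A}\pi(x)$ for all $A$, i.e., $\upr$ is the possibility measure $\Pi$ induced by $\pi$.

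For the final assertion, recall that possibility measures are coherent upper probabilities (as noted just after Eq.~\eqref{eq:poss-from-dist}). Hence $\upr$ avoids sure loss and coincides with its natural extension, that is, $\unex=\upr$, so $\unex$ is itself a possibility measure.

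There is no real obstacle here: the argument is just the observation that Eq.~\eqref{eq:thm:minmax} directly encodes the definition of a possibility measure applied to $\upr$. The only subtlety worth mentioning explicitly is the normalisation check in the backward direction, which is why the identity is stated for all non-empty $A$ rather than being trivially assumed; applying it to $A=\pspace$ is what guarantees that the candidate $\pi=\min\{\pi_1,\pi_2\}$ is an admissible possibility distribution rather than a sub-normal one.
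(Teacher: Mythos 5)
Your proof is correct and follows essentially the same route as the paper's: both directions rest on the observation that Eq.~\eqref{eq:thm:minmax} is precisely the statement $\upr(A)=\max_{x\in A}\upr(\{x\})$, since the left-hand side is $\upr(A)$ by definition and the right-hand side is the possibility measure generated by the singleton values $\min\{\pi_1(x),\pi_2(x)\}$. Your explicit normalisation check (applying the identity to $A=\pspace$) is a small point the paper's ``if'' direction leaves implicit, but it does not change the argument.
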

\begin{proof}
  Note that the left hand side is $\upr(A)$.

  ``if''. If the equality holds, then $\upr$ is a possibility measure,
  and therefore is coherent. Whence, $\unex=\upr$,
  and so $\unex$ is a possibility measure too.

  ``only if''. On the one hand, by the definition of $\upr$,
  \begin{equation}
    \upr(A)
    =\min\{\Pi_1(A),\Pi_2(A)\}
    =\min\left\{\max_{x\in A}\pi_1(x),\max_{x\in A}\pi_2(x)\right\}
  \end{equation}
  On the other hand, if $\upr$ is a possibility measure, its
  possibility distribution must be
  $\pi(x)=\upr(\{x\})=\min\{\pi_1(x),\pi_2(x)\}$, and so,
  \begin{equation}
    \upr(A)
    =\max_{x\in A}\min\{\pi_1(x),\pi_2(x)\}.
  \end{equation}
  Combining both equalities, we arrive at the desired equality.
\end{proof}

Theorem~\ref{thm:minmax} has a very nice game-theoretic
interpretation. Consider a zero-sum game with two players, where
player 1 can choose $\alpha$ from $\{1,2\}$ and player 2 can choose
$\beta$ from $\{1,\dots,n\}$, with the following payoffs to player
1:
\begin{center}
  \begin{tabular}{c|ccc}
    & $\beta=1$ & \dots & $\beta=n$ \\
    \hline
    $\alpha=1$ & $a_{11}$ & \dots & $a_{1n}$ \\
    $\alpha=2$ & $a_{21}$ & \dots & $a_{2n}$
  \end{tabular}
\end{center}
This table with payoffs to player 1 is called the \emph{payoff
matrix}. For example, if $(\alpha,\beta)=(2,3)$, then player 1 gains
$a_{23}$ and player 2 loses $a_{23}$. A pair $(\alpha,\beta)$ is
called \emph{pure strategy}.

A pure strategy $(\hat{\alpha},\hat{\beta})$ is said to be in
\emph{equilibrium} if it does not benefit either player to change
his choice if the other does not change his choice
\cite[p.~62--64]{1957:luce:raiffa}:
\begin{equation}
  a_{\hat{\alpha}\hat{\beta}}=\max_{\alpha}a_{\alpha \hat{\beta}}=\min_{\beta}a_{\hat{\alpha}\beta}
\end{equation}

\begin{theorem}
  \label{thm:minmax:game}
  $\upr$ is a possibility measure $\Pi$ if and only if
  for all non-empty $A\subseteq\pspace$,
  the zero-sum game with
  choices $\alpha\in\{1,2\}$ and $\beta\in A$,
  and payoffs $a_{\alpha\beta}\coloneqq-\pi_{\alpha}(\beta)$,
  has a pure equilibrium strategy.
\end{theorem}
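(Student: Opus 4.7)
My plan is to reduce this statement to Theorem~\ref{thm:minmax} by translating the possibility-measure equality into the standard minimax/maximin equality of zero-sum game theory, after flipping signs to move from $\pi_\alpha$ to the payoffs $a_{\alpha\beta}=-\pi_\alpha(\beta)$.

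First, I would recall the classical fact (stated, e.g., in Luce--Raiffa, already cited by the paper) that a finite two-player zero-sum game with payoff matrix $(a_{\alpha\beta})$ admits a pure equilibrium strategy if and only if its lower and upper values coincide, that is,
\begin{equation*}
  \max_{\alpha}\min_{\beta}a_{\alpha\beta}=\min_{\beta}\max_{\alpha}a_{\alpha\beta},
\end{equation*}
in which case any $(\hat\alpha,\hat\beta)$ attaining this common value satisfies the equilibrium condition $a_{\hat\alpha\hat\beta}=\max_\alpha a_{\alpha\hat\beta}=\min_\beta a_{\hat\alpha\beta}$ given in the excerpt. (The nontrivial direction is that if the values coincide, a row attaining the outer max in the first expression and a column attaining the outer min in the second expression form an equilibrium pair.)

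Next, I would fix a non-empty $A\subseteq\pspace$ and specialise this equivalence to the game with $\alpha\in\{1,2\}$, $\beta\in A$, and $a_{\alpha\beta}=-\pi_\alpha(\beta)$. Since $\min$ and $\max$ swap under negation, for each $\alpha\in\{1,2\}$,
\begin{equation*}
  \min_{\beta\in A}a_{\alpha\beta}=-\max_{\beta\in A}\pi_\alpha(\beta),
\end{equation*}
and therefore
\begin{equation*}
  \max_{\alpha\in\{1,2\}}\min_{\beta\in A}a_{\alpha\beta}
  =-\min_{\alpha\in\{1,2\}}\max_{\beta\in A}\pi_\alpha(\beta)
  =-\min\{\Pi_1(A),\Pi_2(A)\}.
\end{equation*}
Dually,
\begin{equation*}
  \min_{\beta\in A}\max_{\alpha\in\{1,2\}}a_{\alpha\beta}
  =-\max_{\beta\in A}\min_{\alpha\in\{1,2\}}\pi_\alpha(\beta)
  =-\max_{x\in A}\min\{\pi_1(x),\pi_2(x)\}.
\end{equation*}

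Hence the game on $A$ has a pure equilibrium strategy if and only if
\begin{equation*}
  \min\{\Pi_1(A),\Pi_2(A)\}=\max_{x\in A}\min\{\pi_1(x),\pi_2(x)\},
\end{equation*}
which is exactly the identity in Eq.~\eqref{eq:thm:minmax} of Theorem~\ref{thm:minmax}. Requiring this for every non-empty $A\subseteq\pspace$ is, by Theorem~\ref{thm:minmax}, equivalent to $\upr$ being a possibility measure, completing the proof. The only subtle step is the standard game-theoretic equivalence between existence of a pure equilibrium and equality of the lower and upper values; once that is invoked, the rest is just a sign-flip bookkeeping.
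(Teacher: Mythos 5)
Your proof is correct and follows essentially the same route as the paper's: both reduce the statement to Theorem~\ref{thm:minmax} via the classical equivalence between existence of a pure equilibrium and equality of the lower and upper values of the game, combined with the sign-flip identity $\max_\alpha\min_\beta(-\pi_\alpha(\beta))=-\min_\alpha\max_\beta\pi_\alpha(\beta)$. The paper additionally writes out an explicit equilibrium pair $(\hat\alpha,\hat\beta)$ for the nontrivial direction, which you correctly flag as the one subtle step.
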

\begin{proof}
  ``if''. If the zero-sum game associated with $A\subseteq\pspace$
  has a pure equilibrium strategy
  $(\hat{\alpha},\hat{\beta})$,
  then
  \cite[p.~67]{1957:luce:raiffa}
  \begin{equation}
    a_{\hat{\alpha}\hat{\beta}}
    =\max_{\alpha}\min_{\beta}a_{\alpha \beta}=\min_{\beta}\max_{\alpha}a_{\alpha\beta}.
  \end{equation}
  But $a_{\alpha\beta}\coloneqq-\pi_{\alpha}(\beta)$,
  so this is precisely Equation~\eqref{eq:thm:minmax}.

  ``only if''. If $\upr$ is a possibility measure,
  then Equation~\eqref{eq:thm:minmax} can be rewritten as
  \begin{equation}
    \max_{\alpha}\min_{\beta}a_{\alpha \beta}=\min_{\beta}\max_{\alpha}a_{\alpha\beta}.
  \end{equation}
  This means that the zero-game has a pure equilibrium strategy,
  for example
  \begin{align}
    \hat{\alpha}&\coloneqq\argmax_{\alpha}\min_{\beta}a_{\alpha \beta}
    &
    \hat{\beta}&\coloneqq\argmin_{\beta}a_{\hat{\alpha} \beta}
  \end{align}
\end{proof}

Although Theorem~\ref{thm:minmax:game} is in essence nothing more
but a rephrasing of Theorem~\ref{thm:minmax}, it highlights an
interesting fact: we can use any method for solving $2\times n$
zero-sum games in order to determine whether our conjunction $\upr$
is a possibility measure.

The traditional way of finding pure equilibrium strategies goes by
removing dominated options from the game, until only a single
strategy remains. For $2\times n$ games, this is a particularly
simple process: it suffices first to remove columns that are not
optimal for player 2, and then to check whether, in the payoff
matrix that remains, one of the rows dominates the other. For
example, consider the following $2\times 4$ game with the following
payoff to player 1:
\begin{center}
  \begin{tabular}{c|cccc}
    & $\beta=1$ & $\beta=2$ & $\beta=3$ & $\beta=4$ \\
    \hline
    $\alpha=1$ & 3 & 2 & 2 & 4 \\
    $\alpha=2$ & 0 & 3 & 1 & 0
  \end{tabular}
\end{center}
We can remove the column $\beta=2$ because its payoff is higher than
the payoff of column $\beta=3$ regardless of $\alpha$---%
remember that the column player wants to minimize the payoff. We can
also remove the column $\beta=4$ because its payoff is higher than
the payoff of column $\beta=1$ regardless of $\alpha$. No further
columns can be removed. Now, in the remaining payoff matrix, the row
$\alpha=2$ can be removed
because its payoff is lower than the payoff of row $\alpha=1$---%
remember that the row player wants to maximize the payoff. So, the
row player will play $\alpha=1$. In the remaining row $\alpha=1$,
clearly $\beta=3$ achieves the minimum payoff for player 2. This
game therefore has a pure equilibrium strategy, namely
$(\hat{\alpha},\hat{\beta})=(1,3)$.

The two sufficient conditions provided in Section~\ref{sec:sufficient-cond}
follow immediately from Theorem~\ref{thm:minmax:game}.
Indeed,
let $A=\{a_1,a_2,\dots,a_m\}\subseteq\pspace$.
By Theorem~\ref{thm:minmax:game}, we need to consider the payoff matrix
\begin{center}
  \begin{tabular}{c|ccccc}
    & $\beta=1$ & \dots &  $\beta=k$ & \dots & $\beta=m$ \\
    \hline
    $\alpha=1$ & $-\pi_1(a_1)$ & \dots &  $-\pi_1(a_k)$ & \dots & $-\pi_1(a_m)$ \\
    $\alpha=2$ & $-\pi_2(a_1)$ & \dots &  $-\pi_2(a_k)$ & \dots & $-\pi_2(a_m)$
  \end{tabular}
\end{center}
\begin{itemize}
\item
If $\pi_1(x) \leq \pi_2(x)$ for all $x\in\pspace$, then clearly
$-\pi_1(x) \ge -\pi_2(x)$ for every $x \in A$, regardless of $A$.
Therefore the first row of the payoff matrix will dominate the
second row. As player 1 aims to maximize his payoff, $\alpha=1$ will
achieve his optimal strategy, regardless of what player 2 does.
Consequently, the second row can be eliminated, and the pure
equilibrium is reached for $\alpha=1$ and
$\beta=\argmin_{k\in\{1,\dots,m\}}\{-\pi_1(a_k)$\}.
\item
If there is an ordering $x_1$, \dots, $x_n$
of the elements of $\pspace$ such that
$\pi_i(x_j) \leq \pi_i(x_{j+1})$ for all
$i\in\{1,2\}$ and $j \in \{1,\dots,n-1\}$
then, without loss of generality,
we may assume that the elements $a_1$, \dots, $a_m$ of $A$
are ordered reversely, that is,
$-\pi_i(a_k) \leq -\pi_i(a_{k+1})$ for all
$i\in\{1,2\}$ and $k \in \{1,\dots,m-1\}$.
But then the first column
is dominated by all other columns.
As player 2 aims to minimize his payoff,
$\beta=1$ will achieve his optimal strategy,
regardless of what player 1 does.
Consequently, all columns other than the first can be eliminated,
and the pure equilibrium strategy is reached for
$\alpha=\argmax_{i\in\{1,2\}}\{-\pi_i(a_1)\}$
and $\beta=1$.
\end{itemize}

It is important to note that not every $2\times n$ game has a pure
equilibrium. For example, consider the $2\times 2$ zero-sum game
with the following payoff matrix:
\begin{center}
  \begin{tabular}{c|cccc}
    & $\beta=1$ & $\beta=2$ \\
    \hline
    $\alpha=1$ & 1 & 0 \\
    $\alpha=2$ & 0 & 1
  \end{tabular}
\end{center}

Luce and Raiffa \cite[Appendices~3 and~4]{1957:luce:raiffa} discuss
two very nice graphical ways of representing and solving $2\times n$
zero-sum games. Both methods are particularly suited also to
determine whether there are pure equilibrium points. Without going
into too much detail, their first method makes it easy to identify
whether player 1 has a pure equilibrium strategy, whilst their
second method makes it easy to identify whether player 2 has a pure
equilibrium strategy. Because player 2 must have a pure equilibrium
strategy whenever player 1 has a pure one, the first method is most
straightforward for our purpose.

First, we draw all lines $f_\beta(p)\coloneqq
pa_{1\beta}+(1-p)a_{2\beta}$, for $p\in[0,1]$ and all $\beta\in A$.
We then determine the lower envelope $f_A(p)$ of these lines:
\begin{equation}
  f_A(p)\coloneqq\min_{\beta\in A}f_\beta(p).
\end{equation}
Note that $f_A$ will be a concave function. If $f_A$ is monotone
(i.e. has its maximum at $p=0$ or $p=1$), then there is a pure
equilibrium point.

A further substantial gain can be made by recognising that the
monotonicity of a concave function $f_A(p)$ between $p=0$ and $p=1$
is uniquely determined by $f_A'(0)$ and $f_A'(1)$: $f$ is monotone
if and only if $f_A'(0)f_A'(1)\ge 0$. Because $f_A(p)$ is piece-wise
linear, it suffices therefore to look at the left-most line and
right-most line only: the lower envelope is monotone if and only if
these lines are sloped in the same direction. Consequently, for
application to Theorem~\ref{thm:minmax:game}, it suffices to look at
pairs of lines.
In fact, it suffices to look at pairs of intersecting lines,
because if the lines do not intersect,
then the lower envelope is linear and so guaranteed to be monotone.

We have thus reached the following rather surprising result,
for which we also give a simple proof that does not rely
on zero-sum games:

\begin{theorem}\label{thm:minmax:2}
  $\upr$ is a possibility measure if and only if
  \begin{equation}\label{eq:thm:minmax:2}
    \min_{i\in\{1,2\}}\left(\max_{j\in\{1,2\}}\pi_i(x_j)\right)
    =
    \max_{j\in\{1,2\}}\left(\min_{i\in\{1,2\}}\pi_i(x_j)\right)
  \end{equation}
  for all $\{x_1,x_2\}\subseteq\pspace$.
  In such a case, $\unex$ coincides with
  $\upr$, and whence, $\unex$ is a possibility measure as well.
\end{theorem}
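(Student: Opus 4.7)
The plan is to derive this from Theorem~\ref{thm:minmax} by showing that, once the minimax equality~\eqref{eq:thm:minmax} holds for every $2$-element subset, it automatically propagates to every non-empty subset. The ``only if'' direction is immediate: if~\eqref{eq:thm:minmax} holds for every non-empty $A$, then it holds in particular when $A=\{x_1,x_2\}$, which is exactly~\eqref{eq:thm:minmax:2}.

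For the ``if'' direction, I would fix an arbitrary non-empty $A\subseteq\pspace$ and verify~\eqref{eq:thm:minmax} directly. Without loss of generality assume $\max_{x\in A}\pi_1(x)\le \max_{x\in A}\pi_2(x)$, so that the left-hand side of~\eqref{eq:thm:minmax} equals $\pi_1(x^*)$, where $x^*\in\argmax_{x\in A}\pi_1(x)$. The right-hand side is automatically at most $\max_{x\in A}\pi_1(x)=\pi_1(x^*)$, so it is enough to exhibit some $y\in A$ with $\min\{\pi_1(y),\pi_2(y)\}\ge \pi_1(x^*)$. If $\pi_2(x^*)\ge \pi_1(x^*)$, then $y=x^*$ works. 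Otherwise, set $y^*\in\argmax_{x\in A}\pi_2(x)$ and apply the pairwise hypothesis to $\{x^*,y^*\}$. Since $x^*$ maximises $\pi_1$ on $A$, $y^*$ maximises $\pi_2$ on $A$, and $\pi_1(x^*)\le \pi_2(y^*)$, the left-hand side of~\eqref{eq:thm:minmax:2} for this pair collapses to $\pi_1(x^*)$. On the right-hand side, the term $\min\{\pi_1(x^*),\pi_2(x^*)\}=\pi_2(x^*)$ is strictly below $\pi_1(x^*)$, so the competing term $\min\{\pi_1(y^*),\pi_2(y^*)\}$ must itself equal $\pi_1(x^*)$. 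Hence $y^*$ is the point we needed.

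Once~\eqref{eq:thm:minmax} is established for every non-empty $A$, Theorem~\ref{thm:minmax} immediately yields that $\upr$ is a possibility measure and that $\unex=\upr$.

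The main obstacle is the sub-case $\pi_2(x^*)<\pi_1(x^*)$: the pairwise equality is uninformative until one picks the second element of the pair as a maximiser of $\pi_2$, and one has to verify carefully that the various ``$\max$'' operations collapse in the desired way. A more geometric alternative, suggested by the discussion preceding the theorem, is to note via Theorem~\ref{thm:minmax:game} that the concave piecewise-linear lower envelope of the lines $f_\beta(p)=-p\,\pi_1(\beta)-(1-p)\,\pi_2(\beta)$ for $\beta\in A$ is monotone on $[0,1]$ as soon as no two of these lines form an interior ``tent'', which is exactly the pairwise hypothesis; this would give a more visual argument at the price of some additional bookkeeping about the extreme segments of the envelope.
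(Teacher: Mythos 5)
Your proof is correct and follows essentially the same route as the paper's: both arguments reduce an arbitrary non-empty $A$ to the two-element set $\{x^*,y^*\}$ consisting of a maximiser of $\pi_1$ and a maximiser of $\pi_2$ over $A$, and then invoke the pairwise hypothesis on that set (the paper phrases this as $\upr(A)=\upr(\{x^*,y^*\})=\max\{\upr(\{x^*\}),\upr(\{y^*\})\}$ plus monotonicity of $\upr$, rather than via your WLOG and case split, but the content is identical). The only cosmetic difference is that you route the conclusion through Theorem~\ref{thm:minmax}, whereas the paper verifies $\upr(A)=\max_{x\in A}\upr(\{x\})$ directly; since Eq.~\eqref{eq:thm:minmax} is exactly that identity, this changes nothing.
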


\begin{proof}
First, note that Eq.~\eqref{eq:thm:minmax:2} is equivalent to
saying that
\begin{equation}\label{eq:thm:minmax:2:alt}
  \upr(\{x_1,x_2\})=\max\{\upr(\{x_1\}),\upr(\{x_2\})\}
\end{equation}
for every $\{x_1,x_2\}\subseteq\pspace$.
We show that this is indeed equivalent
to $\upr$ being a possibility measure

`if'.
Consider any non-empty $A\subseteq\pspace$.
Let
\begin{align}
x_1&\coloneqq\argmax_{x\in A}\pi_1(x),
&
x_2&\coloneqq\argmax_{x\in A}\pi_2(x).
\end{align}
Since $\Pi_1$ and $\Pi_2$ are possibility measures,
it immediately follows that
\begin{align}
\Pi_1(A)=\Pi_1(\{x_1\})&=\Pi_1(\{x_1,x_2\}),
\\
\Pi_2(A)=\Pi_2(\{x_2\})&=\Pi_2(\{x_1,x_2\}).
\end{align}
Consequently,
\begin{align}
\upr(A)
&=\min\{\Pi_1(A),\Pi_2(A)\}
\\
&=\min\{\Pi_1(\{x_1,x_2\}),\Pi_2(\{x_1,x_2\})\}
\\
&=\upr(\{x_1,x_2\})
\\
\intertext{and now applying Eq.~\eqref{eq:thm:minmax:2},}
&=\max\{\upr(\{x_1\}),\upr(\{x_2\})
\\
&\le\max_{x\in A}\upr(\{x\})
\end{align}

The converse inequality follows by
monotonicity of $\upr$---indeed,
both $\Pi_1$ and $\Pi_2$ are monotone,
so their minimum must be monotone too.
Specifically, for every $x\in A$ we have that
$\Pi_1(A)\ge\pi_1(x)$ and $\Pi_2(A)\ge\pi_2(x)$, so
\begin{align}
\upr(A)
=\min\{\Pi_1(A),\Pi_2(A)\}
\ge\min\{\pi_1(x),\pi_2(x)\}
=\upr(\{x\})
\end{align}
and therefore $\upr(A)\ge\max_{x\in A}\upr(\{x\})$. Thus,
$\upr(A)=\max_{x\in A}\upr(\{x\})$ and as a consequence it is a
possibility measure.

`only if'.
If $\upr$ is a  possibility measure
then $\upr(A)=\max_{x\in A}\upr(\{x\})$ for all
non-empty $A\subseteq\pspace$,
and in particular also for all $A=\{x_1,x_2\}$.
Eq.~\eqref{eq:thm:minmax:2:alt} follows.
\end{proof}

\subsection{Examples}
\label{sec:graphical-method-examples}

The verification of Theorem~\ref{thm:minmax:game} entails looking at
every pair of lines $f_{\beta}$ and $f_{\gamma}$, and checking:
\begin{itemize}
\item whether $f_{\beta}$ and $f_{\gamma}$ intersect for some $0<p<1$,
that is, whether $f_{\beta}(p)=f_{\gamma}(p)$ for some $0<p<1$;
\item if so, whether $f_{\beta}$ and $f_{\gamma}$ have the same slope.
\end{itemize}
If for all intersecting pairs, both lines have the same slope,
then the conditions of Theorem~\ref{thm:minmax:game} are satisfied,
and the conjunction will be a possibility measure.

Let us first provide an example, inspired by Sandri \textit{et al.}~\cite{1995:sandri},
where the conditions hold.

\begin{example}\label{exm:hold}
Two economists provide their opinion
about the value ($\mathcal{X}=\{1,\ldots,9\}$) of a future stock market:
\begin{center}
\begin{tabular}{c|cccccccccc}
   & 1  & 2 & 3 & 4 & 5 & 6 & 7 & 8 & 9 \\\hline
  $\pi_1$ & 1 & 0.95 & 0.95 & 0.8 & 0.7 & 0.2 & 0.3 & 0.1 & 0.05 \\
  $\pi_2$ & 1 & 0.8 & 0.6 & 0.7 & 0.6 & 0.6 & 0.3 & 0.4 & 0.1 \end{tabular}
\end{center}
which are pictured as $f_{\beta}$ for $\beta \in \{1,\ldots,9\}$ in
Figure~\ref{fig:exampleok}. We actually pictured $-f_{\beta}$, to
make it easier to relate the lines to the possibility distributions.
It can be checked that the conditions required by
Theorem~\ref{thm:minmax:game} hold for every pair. This means that
the merged opinion $\upr$ of the two economists can be represented
as a possibility distribution. Figure~\ref{fig:exampleok} makes
verification even easier: there are only three intersecting pairs,
namely $(f_3,f_4)$, $(f_6,f_7)$, and $(f_7,f_8)$, and in each pair,
both lines have the same slope. Consequently, $\upr$ is a
possibility measure induced by the possibility distribution
\begin{center}
\begin{tabular}{c|ccccccccc}
   & 1 & 2 & 3 & 4 & 5 & 6 & 7 & 8 & 9  \\\hline
  $\pi$ & 1 & 0.8 & 0.6 & 0.7 & 0.6 & 0.2 & 0.3 & 0.1 & 0.05
\end{tabular}
\end{center}
\end{example}

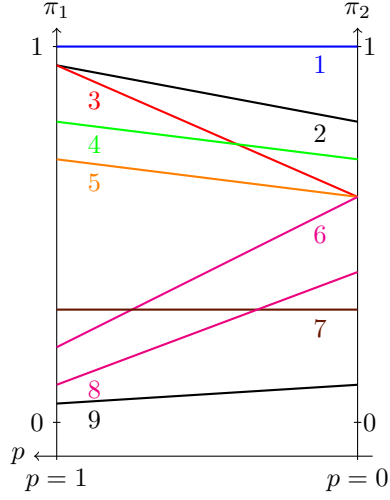
\begin{figure}
\begin{center}
\begin{tikzpicture}
\draw[->] (0,-0.5)  -- (0,5.25) node[above] {$\pi_1$};
\draw (0.05,0) --  (-0.05,0) node[left] {0};
\draw (0.05,5) --  (-0.05,5) node[left] {1};
\draw[->] (4,-0.5)  -- (4,5.25) node[above] {$\pi_2$};
\draw (4.05,0) --  (3.95,0) node[right] {0};
\draw (4.05,5) --  (3.95,5) node[right] {1};
\draw[blue,thick] (0,1*5) -- (4,1*5) node[very near end,below] {$1$};
\draw[thick] (0,0.95*5) -- (4,0.8*5) node[very near end, below] {$2$};
\draw[red,thick] (0,0.95*5) -- (4,0.6*5) node[very near start,below] {$3$};
\draw[green,thick] (0,0.8*5) -- (4,0.7*5) node[very near start,below] {$4$};
\draw[orange,thick] (0,0.7*5) -- (4,0.6*5) node[very near start,below] {$5$};
\draw[magenta,thick] (0,0.2*5) -- (4,0.6*5) node[very near end,below] {$6$};
\draw[Sepia,thick] (0,0.3*5) -- (4,0.3*5) node[very near end,below] {$7$};
\draw[RubineRed,thick] (0,0.1*5) -- (4,0.4*5) node[very near start,below] {$8$};
\draw[thick] (0,0.05*5) -- (4,0.1*5) node[very near start,below] {$9$};
\draw[->] (4.2,-0.45) -- (-0.3,-0.45) node[left] {$p$};
\node[below] at (0,-0.5) {$p=1$};
\node[below] at (4,-0.5) {$p=0$};
\end{tikzpicture}
\end{center}
\caption{Example~\ref{exm:hold} game-theoretic figure. }
\label{fig:exampleok}
\end{figure}

When $\pi_1$ and $\pi_2$ do not satisfy
the conditions of Theorem~\ref{thm:minmax:game},
our graphical verification technique
also allows us to heuristically adjust $\pi_1$ and $\pi_2$
into new possibility distributions that do satisfy
the conditions of Theorem~\ref{thm:minmax:game}.
The next example illustrates this heuristic procedure.

\begin{example}\label{exm:donothold}
Two economists provide the following opinions:
 \begin{center}
 \begin{tabular}{c|cccccccc}
    & 1 & 2 & 3 & 4 & 5 & 6 & 7 & 8 \\\hline
   $\pi_1$ & 1 & 0.9 & 0.7 & 0.6 & 0.5 & 0.4 & 0.3 & 0.1\\
   $\pi_2$ & 0.8 &  0.2 & 1 & 0.6 & 0.1 & 0.2 & 0.3 & 0.9
 \end{tabular}
 \end{center}
The left hand side of
Figure~\ref{fig:examplenotok} depicts our graphical method.
Many pairs of intersecting lines have opposite slopes,
for instance $(f_8,f_2)$.
Therefore, the conditions of Theorem~\ref{thm:minmax:game}
are not satisfied.
Interestingly, there is no
$x \in \mathcal{X}$ such that $\pi_1(x)=\pi_2(x)=1$---%
this is a necessary condition for $\upr$ to be a possibility
measure; see proof of
Lemma~\ref{lem:possibonsingletons}\ref{lem:possibonsingletons:uprpossib}
further on.

A possible adjustment that allows to satisfy the conditions of
Theorem~\ref{thm:minmax:game}, can be done for example by modifying
$f_1$, $f_2$ and $f_8$, so that $f_1$ and $f_2$ become positively
slopped,
and so that $f_8$ no longer intersects with $f_5$---%
of course, conservative adjustments
should only be done by moving lines upwards.
The right hand side of Figure~\ref{fig:examplenotok}
shows the adjusted lines dashed.
They result in the following adjusted possibility distributions:
 \begin{center}
 \begin{tabular}{c|cccccccc}
    & 1 & 2 & 3 & 4 & 5 & 6 & 7 & 8 \\\hline
   $\pi'_1$ & 1 & 0.9 & 0.7 & 0.6 & 0.5 & 0.4 & 0.3 & 0.5\\
   $\pi'_2$ & 1 &  0.9 & 1 & 0.6 & 0.1 & 0.2 & 0.3 & 0.9
 \end{tabular}
 \end{center}
The resulting adjusted conjunction is:
 \begin{center}
 \begin{tabular}{c|cccccccc}
    & 1 & 2 & 3 & 4 & 5 & 6 & 7 & 8 \\\hline
   $\pi'$ & 1 & 0.9 & 0.7 & 0.6 & 0.1 & 0.2 & 0.3 & 0.5
 \end{tabular}
 \end{center}
\end{example}

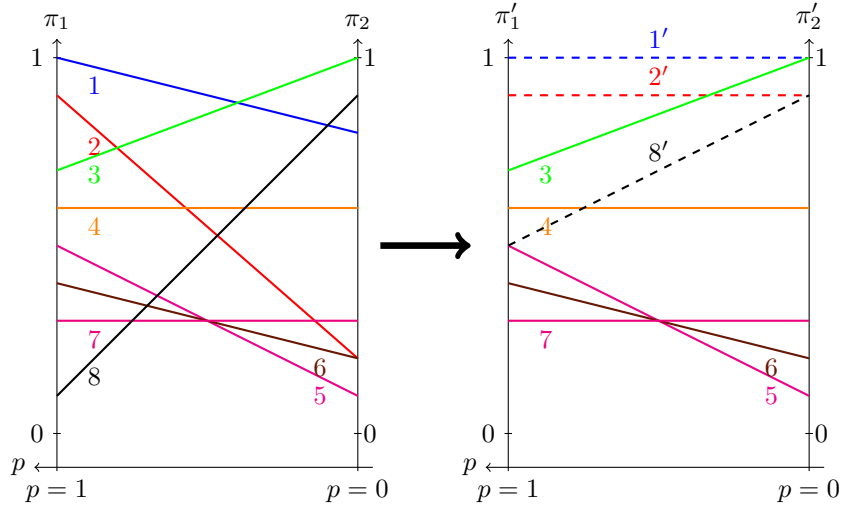
\begin{figure}
 \begin{center}
 \begin{tikzpicture}
 \draw[->] (0,-0.5)  -- (0,5.25) node[above] {$\pi_1$};
 \draw (0.05,0) --  (-0.05,0) node[left] {0};
 \draw (0.05,5) --  (-0.05,5) node[left] {1};
 \draw[->] (4,-0.5)  -- (4,5.25) node[above] {$\pi_2$};
 \draw (4.05,0) --  (3.95,0) node[right] {0};
 \draw (4.05,5) --  (3.95,5) node[right] {1};
 \draw[blue,thick] (0,1*5) -- (4,0.8*5) node[very near start,below] {$1$};
 \draw[red,thick] (0,0.9*5) -- (4,0.2*5) node[very near start,below] {$2$};
 \draw[green,thick] (0,0.7*5) -- (4,1*5) node[very near start,below] {$3$};
 \draw[orange,thick] (0,0.6*5) -- (4,0.6*5) node[very near start,below] {$4$};
 \draw[magenta,thick] (0,0.5*5) -- (4,0.1*5) node[very near end,below] {$5$};
 \draw[Sepia,thick] (0,0.4*5) -- (4,0.2*5) node[very near end,below] {$6$};
 \draw[RubineRed,thick] (0,0.3*5) -- (4,0.3*5) node[very near start,below] {$7$};
 \draw[black,thick] (0,0.1*5) -- (4,0.9*5) node[very near start,below] {$8$};
 \draw[->] (4.2,-0.45) -- (-0.3,-0.45) node[left] {$p$};
 \node[below] at (0,-0.5) {$p=1$};
 \node[below] at (4,-0.5) {$p=0$};

  \draw[line width=2.5pt,->] (4.3,2.5) -- (5.5,2.5);

  \begin{scope}[xshift=6cm]
  \draw[->] (0,-0.5)  -- (0,5.25) node[above] {$\pi'_1$};
   \draw (0.05,0) --  (-0.05,0) node[left] {0};
   \draw (0.05,5) --  (-0.05,5) node[left] {1};
   \draw[->] (4,-0.5)  -- (4,5.25) node[above] {$\pi'_2$};
   \draw (4.05,0) --  (3.95,0) node[right] {0};
   \draw (4.05,5) --  (3.95,5) node[right] {1};
    \draw[blue,thick,dashed] (0,1*5) -- (4,1*5) node[midway,above] {$1'$};
    \draw[red,thick,dashed] (0,0.9*5) -- (4,0.9*5) node[midway,above] {$2'$};
   \draw[green,thick] (0,0.7*5) -- (4,1*5) node[very near start,below] {$3$};
   \draw[orange,thick] (0,0.6*5) -- (4,0.6*5) node[very near start,below] {$4$};
   \draw[magenta,thick] (0,0.5*5) -- (4,0.1*5) node[very near end,below] {$5$};
   \draw[Sepia,thick] (0,0.4*5) -- (4,0.2*5) node[very near end,below] {$6$};
   \draw[RubineRed,thick] (0,0.3*5) -- (4,0.3*5) node[very near start,below] {$7$};
    \draw[black,thick,dashed] (0,0.5*5) -- (4,0.9*5) node[midway,above] {$8'$};
    \draw[->] (4.2,-0.45) -- (-0.3,-0.45) node[left] {$p$};
    \node[below] at (0,-0.5) {$p=1$};
    \node[below] at (4,-0.5) {$p=0$};
  \end{scope}
 \end{tikzpicture}
 \end{center}
 \caption{Example~\ref{exm:donothold} game-theoretic figure and adjustment (in dashed). }
 \label{fig:examplenotok}
\end{figure}

It is clear that any upward adjustment implies a loss of information.
In general, there is no unique adjustment minimizing this loss.
In any case, upward adjustment ensures that the obtained result will be
consistent with the initial information,
as it will give an outer approximation.

If there is an element $x$ such that $\pi_1(x)=\pi_2(x)=1$,
then adjustments can also be done downwards,
in which case the obtained approximation would be an inner approximation.

\section{When is $\unex$ a possibility measure?}
\label{sec:unex:possib}

The above condition for $\upr$ to be a possibility measure is
obviously sufficient for $\unex$ to be a possibility measure.
However, the condition is not necessary, as shown by the next
example:

\begin{example}\label{ex:unex-upr-not-equiv}
Let
\begin{center}
  \begin{tabular}{c|ccc}
    & 1 & 2 & 3 \\
    \hline
    $\pi_1$ & 1 & 1 & 0 \\
    $\pi_2$ & 1 & 0 & 1
  \end{tabular}
\end{center}
The credal set of the conjunction is the singleton
$\credalset=\{\apr\}$ for which $\apr(\{1\})=1$ (and zero
elsewhere), because this is the only probability measure that
satisfies $\apr(\{x\})\le\upr(\{x\})$ for all $x$. Whence, the
natural extension $\unex$ of $\upr$ is obviously a possibility
measure.

Nevertheless, $\upr$ is \emph{not} a possibility measure. Indeed,
\begin{equation}
  \min_{i\in\{1,2\}}\left(\max_{j\in\{2,3\}}\pi_i(j)\right)
  =
  \max_{j\in\{2,3\}}\left(\min_{i\in\{1,2\}}\pi_i(j)\right)
\end{equation}
as the left hand side is one, and the right hand side is zero. This
is because $\upr$ is not a coherent upper probability, since
$\upr(\{2,3\})=1>\upr(\{2\})+\upr(\{3\})$.
\end{example}

Indeed, when $\upr$ is coherent then it coincides with $\unex$, and
therefore in that case $\unex$ is a possibility measure if and only
if $\upr$ is. Below, we state a number of necessary conditions for
$\unex$ to be a possibility measure. So far, we failed to identify a
condition that is both sufficient \emph{and} necessary.

\begin{lemma}
  \label{lem:pixone}
  If $\unex$ is a possibility measure $\Pi$, then there is an
  $x\in\pspace$ such that $\pi(x)=\pi_1(x)=\pi_2(x)=1$.
\end{lemma}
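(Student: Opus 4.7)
The plan is to exploit the normalization condition of the possibility distribution of $\unex$, together with the fact that $\unex$ is an upper envelope of $\credalset$, to locate the required $x$.

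First I would use normalization: since $\unex = \Pi$ is a possibility measure, its possibility distribution $\pi$ given by $\pi(x) = \unex(\{x\})$ must satisfy $\max_{x \in \pspace} \pi(x) = 1$. Therefore there exists some $x \in \pspace$ with $\unex(\{x\}) = 1$.

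Next I would use the definition of the natural extension as an upper envelope. Because $\credalset$ is a non-empty closed (hence compact, as $\pspace$ is finite) set, the maximum in Eq.~\eqref{eq:unex} is attained. So there is some $Q \in \credalset = \credalset_1 \cap \credalset_2$ with $Q(\{x\}) = 1$. Now $Q \in \credalset_1$ forces $1 = Q(\{x\}) \leq \Pi_1(\{x\}) = \pi_1(x)$, whence $\pi_1(x) = 1$; and symmetrically $\pi_2(x) = 1$ from $Q \in \credalset_2$. Combined with $\pi(x) = \unex(\{x\}) = 1$, this gives $\pi(x) = \pi_1(x) = \pi_2(x) = 1$, as required.

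There is no real obstacle here, since the argument only chains together the normalization of $\pi$, the attainment of the upper envelope over the compact credal set $\credalset$, and the defining inequalities of $\credalset_1$ and $\credalset_2$.
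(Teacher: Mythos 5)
Your proof is correct and follows essentially the same route as the paper: normalization of $\pi$ gives an $x$ with $\unex(\{x\})=1$, and then the domination $\unex\le\upr=\min\{\Pi_1,\Pi_2\}$ forces $\pi_1(x)=\pi_2(x)=1$. The only cosmetic difference is that you unpack this domination by passing through a maximizing $Q\in\credalset$, whereas the paper invokes the inequality $\unex(\{x\})\le\min\{\pi_1(x),\pi_2(x)\}$ directly.
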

\begin{proof}
  If $\unex$ is a possibility measure, then $\unex(\{x\})=\pi(x)=1$ for at
  least one $x\in\pspace$. For any such $x$,
  \begin{equation}
    1=\unex(\{x\})\le\min\{\pi_1(x),\pi_2(x)\},
  \end{equation}
  whence, it can only be that $\pi_1(x)=\pi_2(x)=1$ for such $x$.
\end{proof}

Of course, if $\unex$ is a possibility measure and $\upr$ is not
coherent, then $\unex$ and $\upr$ will not coincide on all events.
We shall show next that they are always guaranteed to coincide on
the singletons. In order to see this, note that if $\upr$ is a
possibility measure, then its possibility distribution is given by
\begin{equation}\label{eq:possib-distrib-1}
  \pi(x)\coloneqq\min\{\pi_1(x),\pi_2(x)\}=\upr(\{x\}).
\end{equation}
We denote the possibility measure determined by this distribution by
\begin{equation}\label{eq:possib-distrib-2}
  \Pi(A)\coloneqq\max_{x\in A}\pi(x).
\end{equation}
We can establish the following.

\begin{lemma}\label{lem:possibonsingletons}
The following statements hold.
\begin{enumerate}[(a)]
 \item\label{lem:possibonsingletons:uprgepi}
 $\upr\geq\Pi$.
 \item\label{lem:possibonsingletons:normed}
 $\Pi$ is normed if and only if
 there is an $x\in\pspace$ such that $\upr(\{x\})=1$.
 In that case, $\upr$ avoids sure loss and
 $\upr\geq\unex\geq\Pi$.
 \item\label{lem:possibonsingletons:uprpossib}
 $\upr$ is a possibility measure if and only if $\upr=\Pi$.
 \item\label{lem:possibonsingletons:unexpossib}
 $\unex$ is a possibility measure if and only if $\unex=\Pi$.
\end{enumerate}
\end{lemma}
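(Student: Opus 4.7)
The plan is to prove the four statements in order, since each later part uses the earlier ones.

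For (a), I would invoke the standard max-min inequality pointwise in $A$. For any $x \in A$ and each $i\in\{1,2\}$, $\min\{\pi_1(x),\pi_2(x)\}\le\pi_i(x)\le\max_{y\in A}\pi_i(y)$, so taking the max over $x$ and then the min over $i$ gives $\Pi(A)\le\min_i\Pi_i(A)=\upr(A)$. For the equivalence in (b), I would chain rewritings: $\Pi$ is normed iff $\max_x\pi(x)=1$ iff there is $x^*$ with $\pi_1(x^*)=\pi_2(x^*)=1$ iff there is $x^*$ with $\upr(\{x^*\})=1$. For any such $x^*$, the Dirac measure $\delta_{x^*}$ satisfies $\delta_{x^*}(A)\le\Pi_i(A)$ for every $A$ and both $i$ (the left side is $1$ only when $x^*\in A$, in which case $\Pi_i(A)\ge\pi_i(x^*)=1$), so $\delta_{x^*}\in\credalset$ and $\upr$ avoids sure loss. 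The inequality $\upr\ge\unex$ is immediate from the definition of the natural extension.

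The main obstacle in (b) is the other inequality $\unex\ge\Pi$. For any $z\in\pspace$ I would consider the two-point probability
\[
Q_z\coloneqq\pi(z)\,\delta_z+(1-\pi(z))\,\delta_{x^*},
\]
where $x^*$ is a joint mode as above. A short case analysis on whether $z$ and $x^*$ lie in $A$ shows $Q_z(A)\le\Pi_i(A)$ for both $i$ (using $\pi(z)\le\pi_i(z)$ in the case $z\in A$, $x^*\notin A$, and $\pi_i(x^*)=1$ whenever $x^*\in A$), so $Q_z\in\credalset$. Hence $\unex(\{z\})\ge Q_z(\{z\})=\pi(z)$. Combined with $\unex\le\upr$ and $\upr(\{z\})=\pi(z)$ this gives $\unex(\{z\})=\pi(z)$ for every $z$. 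Coherence (in particular monotonicity) of $\unex$ then yields $\unex(A)\ge\max_{z\in A}\unex(\{z\})=\Pi(A)$, so $\upr\ge\unex\ge\Pi$.

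Parts (c) and (d) then fall out quickly. In both, one direction is trivial, since $\Pi$ is a possibility measure by construction. For the converse in (c), if $\upr$ is a possibility measure then it is normed and determined by its singleton values, which are $\upr(\{x\})=\pi(x)$; hence $\upr(A)=\max_{x\in A}\pi(x)=\Pi(A)$. For (d), if $\unex$ is a possibility measure then being normed forces some $\unex(\{x\})=1$, so $\upr(\{x\})\ge 1$ and $\Pi$ is normed by the equivalence in (b); invoking (b) again gives $\unex(\{z\})=\pi(z)$ for every $z$, so $\unex(A)=\max_{z\in A}\unex(\{z\})=\Pi(A)$. The only genuinely non-routine step is the construction of the witnesses $Q_z$ in (b); everything else reduces to manipulating suprema and applying the characterisation of a possibility measure by its values on singletons.
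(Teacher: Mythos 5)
Your proof is correct, and for parts (a), (c) and (d) it follows essentially the same route as the paper: (a) by the pointwise max--min inequality, and (c)--(d) by showing that the relevant measure agrees with $\Pi$ on singletons and then invoking the fact that possibility measures are determined by their restrictions to singletons. The one place where you genuinely diverge is the inequality $\unex\geq\Pi$ in part (b). The paper disposes of it in one line by noting that $\Pi$ is itself a coherent upper probability dominated by $\upr$, and that the natural extension $\unex$ is the point-wise largest coherent upper probability dominated by $\upr$ (citing Walley's 3.1.2(e)), so $\Pi\le\unex$ automatically. You instead construct, for each $z\in\pspace$, the explicit witness $Q_z=\pi(z)\delta_z+(1-\pi(z))\delta_{x^*}$, verify by a four-way case analysis that $Q_z\in\credalset$, deduce $\unex(\{z\})=\pi(z)$, and finish by monotonicity of $\unex$. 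Your construction is sound (the case analysis checks out, including the degenerate case $z=x^*$), and it buys self-containedness: it avoids appealing to the abstract maximality property of the natural extension and to the coherence of $\Pi$ as an upper probability, at the cost of being longer. As a side benefit, your argument delivers the identity $\unex(\{z\})=\pi(z)$ explicitly, which is exactly the fact both you and the paper need again in part (d); in the paper this is recovered from the sandwich $\upr\geq\unex\geq\Pi$ together with $\upr(\{z\})=\Pi(\{z\})=\pi(z)$, which your statement of (b) also supports, so your use of (b) in (d) is legitimate either way.
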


\begin{proof}
\begin{enumerate}[(a)]
\item Consider any $A\subseteq\pspace$. Observe that, for any $x\in A$,
  \begin{align}
    \max_{x'\in A}\pi_1(x')&\ge\pi_1(x)\ge\pi(x), \\
    \max_{x'\in A}\pi_2(x')&\ge\pi_2(x)\ge\pi(x).
  \end{align}
  Whence,
  \begin{equation}
    \upr(A)=\min\left\{\max_{x'\in A}\pi_1(x'),\max_{x'\in A}\pi_2(x')\right\}
    \ge
    \pi(x)
  \end{equation}
  for all $x\in A$. We immediately arrive at the desired inequality.

\item $\Pi$ is normed if and only if there is some $x\in\pspace$
such that $\pi(x)=\upr(\{x\})=1$. In that case, the degenerate
probability measure on $x$ belongs to
$\credalset_1\cap\credalset_2$, and as a consequence $\upr$ avoids
sure loss. Moreover, $\Pi$ is then a coherent upper probability that
is dominated by $\upr$, whence $\Pi$ must also be dominated by the natural
extension $\unex$ of $\upr$,
because $\unex$ is the point-wise largest coherent upper probability
that is dominated by $\upr$ \cite[3.1.2(e)]{1991:walley}.

\item If $\upr$ is a possibility measure,
then $\upr(\{x\})=1$ for some $x\in\pspace$.
Consequently, by \ref{lem:possibonsingletons:normed}
\begin{equation}\label{eq:lem:pf:possibonsingletons:1}
  \upr(A)\geq\unex(A)\geq\Pi(A)\text{ for all }A\subseteq\pspace.
\end{equation}
Because $\upr(\{x\})=\min\{\pi_1(x),\pi_2(x)\}=\Pi(\{x\})$
for all $x\in\pspace$,
it follows that also
\begin{equation}\label{eq:lem:pf:possibonsingletons:2}
  \upr(\{x\})=\unex(\{x\})=\Pi(\{x\})\text{ for all }x\in\pspace.
\end{equation}
Because both $\upr$ and $\Pi$ are possibility measures,
they are uniquely determined by their restriction to singletons,
and therefore $\upr=\Pi$.
The converse implication is trivial.

\item Similarly, if $\unex$ is a possibility measure,
then $\unex(\{x\})=1$ for some $x\in\pspace$.
Because $\upr\ge\unex$,
it can only be that also $\upr(\{x\})=1$ for that same $x$.
Consequently, by \ref{lem:possibonsingletons:normed},
Eq.~\eqref{eq:lem:pf:possibonsingletons:1} must hold here as well.
Again, because $\upr(\{x\})=\min\{\pi_1(x),\pi_2(x)\}=\Pi(\{x\})$
for all $x\in\pspace$,
it follows that Eq.~\eqref{eq:lem:pf:possibonsingletons:2}
holds here too.
Because both $\unex$ and $\Pi$ are possibility measures,
they are uniquely determined by their restriction to singletons,
and therefore $\unex=\Pi$.
(Note that $\upr$ does not always coincide with $\Pi$
in this case because $\upr$ may not be a possibility measure;
see Example~\ref{ex:unex-upr-not-equiv}.)
Again, the converse implication is trivial. \hfill $\qedhere$
\end{enumerate}
\end{proof}

To see that $\Pi$ need not be normed for $\upr$ to avoid sure loss
(or even to be coherent), it suffices to consider
Example~\ref{ex:coh-not-possib}.
However, for
$\upr$ to be a possibility measure,
$\Pi$ need to be normed,
as we can deduce from
Lemma~\ref{lem:possibonsingletons}\ref{lem:possibonsingletons:uprpossib}.

Lemma~\ref{lem:possibonsingletons}\ref{lem:possibonsingletons:uprgepi} also indicates that
taking the minimum between two possibility distributions $\pi_1$ and $\pi_2$,
which is the most conservative conjunctive operator in possibility theory,
will always provide an inner approximation of $\upr$ when $\upr$ is not a possibility measure.
In a way, our heuristic method for adjusting possibility distributions
to ensure that the conjunction is a possibility measure
provides an even more conservative conjunctive operator,
which in addition also ensures coherence
unlike the plain minimum operator.

The next result shows that
Example~\ref{ex:unex-upr-not-equiv}
hinges on $\pi_1$ and $\pi_2$ not being strictly positive.

\begin{theorem}\label{thm:notposswhennonzero}
  Let $\pi_1$ and $\pi_2$ be two strictly positive possibility distributions.
  Then $\unex$ is a possibility measure if and only if
  $\upr$ is a possibility measure.
\end{theorem}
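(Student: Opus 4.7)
The easy direction is immediate: if $\upr$ is a possibility measure it is coherent, so $\unex=\upr$ and $\unex$ is a possibility measure too. The plan for the nontrivial direction is to argue by contradiction, assuming $\unex$ is a possibility measure but $\upr$ is not, and to exploit strict positivity in order to construct a probability $Q\in\credalset$ that exceeds $\unex$ on some two-element event.

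The setup combines three earlier results. By Lemma~\ref{lem:possibonsingletons}\ref{lem:possibonsingletons:unexpossib}, $\unex=\Pi$, where $\pi(x)=\min\{\pi_1(x),\pi_2(x)\}$. By Lemma~\ref{lem:pixone}, there is a common peak $y^*\in\pspace$ with $\pi_1(y^*)=\pi_2(y^*)=1$. By Theorem~\ref{thm:minmax:2}, if $\upr$ fails to be a possibility measure, then the failure already occurs on some two-element set $\{x_1,x_2\}$; that is, $\upr(\{x_1,x_2\})>\max\{\upr(\{x_1\}),\upr(\{x_2\})\}=\Pi(\{x_1,x_2\})$. Without loss of generality, assume $\upr(\{x_1,x_2\})=\Pi_1(\{x_1,x_2\})=\pi_1(x_1)$ and $\pi_1(x_1)\geq\pi_1(x_2)$. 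Unpacking the strict inequality yields $\pi_2(x_1)<\pi_1(x_1)$, $\pi_1(x_2)<\pi_1(x_1)$, and $\pi_2(x_2)\geq\pi_1(x_1)$, from which one also deduces that $y^*$ is distinct from $x_1$ and $x_2$ (since $y^*=x_1$ would force $\pi_2(x_1)=1$ and $y^*=x_2$ would force $\pi_1(x_2)=\pi_1(x_1)$).

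The contradiction will come from exhibiting $Q\in\credalset$ with $Q(\{x_1,x_2\})>\max\{\pi_2(x_1),\pi_1(x_2)\}$, which contradicts $\unex(\{x_1,x_2\})=\Pi(\{x_1,x_2\})=\max\{\pi_2(x_1),\pi_1(x_2)\}$. I would split on whether $\pi_2(x_1)+\pi_1(x_2)\leq\pi_1(x_1)$. In the first case, take $Q$ concentrated on $\{x_1,x_2,y^*\}$ with $Q(\{x_1\})=\pi_2(x_1)$, $Q(\{x_2\})=\pi_1(x_2)$, and the remaining mass on $y^*$; then $Q(\{x_1,x_2\})=\pi_2(x_1)+\pi_1(x_2)$, and strict positivity of $\pi_1,\pi_2$ forces this sum to strictly exceed $\max\{\pi_2(x_1),\pi_1(x_2)\}$. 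In the second case, take $Q(\{x_1\})=\pi_2(x_1)$, $Q(\{x_2\})=\pi_1(x_1)-\pi_2(x_1)$, and $Q(\{y^*\})=1-\pi_1(x_1)$, so $Q(\{x_1,x_2\})=\pi_1(x_1)$, which strictly exceeds both $\pi_2(x_1)$ and $\pi_1(x_2)$ by the inequalities derived above. In each case, one checks that $Q$ belongs to both $\credalset_1$ and $\credalset_2$ by verifying domination on the eight subsets of $\{x_1,x_2,y^*\}$; the inequalities needed are exactly those just established, with the $\{x_1,x_2\}$ check coming from the case assumption and the checks involving $y^*$ trivialized by $\pi_1(y^*)=\pi_2(y^*)=1$. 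The main obstacle, and the reason strict positivity is indispensable, is ensuring the strict inequality $Q(\{x_1,x_2\})>\max\{\pi_2(x_1),\pi_1(x_2)\}$ in Case~1: if either $\pi_2(x_1)$ or $\pi_1(x_2)$ were allowed to be zero, the construction in Case~1 would only yield equality, exactly matching the failure illustrated in Example~\ref{ex:unex-upr-not-equiv}.
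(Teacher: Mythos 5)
Your proposal is correct and follows essentially the same route as the paper's proof: reduce to a failing pair via Theorem~\ref{thm:minmax:2}, place all remaining mass on the common peak $x^*$ guaranteed by Lemma~\ref{lem:possibonsingletons}\ref{lem:possibonsingletons:unexpossib}, and split into the two cases $\upr(\{x_1\})+\upr(\{x_2\})\le\upr(\{x_1,x_2\})$ (where strict positivity is needed) and its negation (where saturating the pair suffices). The only difference is a cosmetic relabelling of which index and which element realise the relevant minima and maxima, so the displayed inequalities look different but encode the same configuration and the same two constructions of $Q$.
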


\begin{proof}
  `if'.
  If $\upr$ is a possibility measure, then
  $\upr$ is coherent, and therefore coincides with its natural extension.
  So, $\unex$ will be a possibility measure as well.

  `only if'.
If $\unex$ is a possibility measure
then,
by Lemma~\ref{lem:possibonsingletons}\ref{lem:possibonsingletons:unexpossib},
$\unex=\Pi$, with $\pi$ and $\Pi$ defined
as in Eqs.~\eqref{eq:possib-distrib-1} and~\eqref{eq:possib-distrib-2}.
In particular, there is some
$x^*\in\pspace$ such that
$\unex(\{x^*\})=\upr(\{x^*\})=\pi_1(x^*)=\pi_2(x^*)=1$.

Assume ex-absurdo that $\upr$ is not a possibility measure.
By Theorem~\ref{thm:minmax:2},
there must be $\{x_1,x_2\}\subseteq\pspace$ such that
  \begin{equation}
    \min_{i\in\{1,2\}}\left(\max_{j\in\{1,2\}}\pi_i(x_j)\right)
    \neq
    \max_{j\in\{1,2\}}\left(\min_{i\in\{1,2\}}\pi_i(x_j)\right)
  \end{equation}
  This inequality can only hold if the matrix
  \begin{equation}
  \begin{bmatrix}
  \pi_1(x_1) & \pi_1(x_2) \\
  \pi_2(x_1) & \pi_2(x_2)
  \end{bmatrix}
  \end{equation}
  has neither dominating rows nor dominating columns,
  or in other words, we must have either
  \begin{equation}\label{eq:pf:inequalities:cases}
          \begin{matrix}
          \pi_1(x_1) & < & \pi_1(x_2) \\
           \wedge & &  \vee \\
          \pi_2(x_1) & > & \pi_2(x_2)
          \end{matrix}
    \qquad \textrm{or} \qquad
       \begin{matrix}
        \pi_1(x_1) & > & \pi_1(x_2) \\
         \vee & &  \wedge \\
        \pi_2(x_1) & < & \pi_2(x_2)
        \end{matrix}
    \end{equation}
Without loss of generality, we can assume that the first situation
holds, as we can always swap $x_1$ and $x_2$.
From these strict inequalities, it follows that
\begin{align}
\max\{\pi_1(x_1),\pi_2(x_2)\}
=\max_{j\in\{1,2\}}\left(\min_{i\in\{1,2\}}\pi_i(x_j)\right)
=\max\{\unex(\{x_1\}),\unex(\{x_2\})\},
\end{align}
where last equality follows from
Lemma~\ref{lem:possibonsingletons}\ref{lem:possibonsingletons:unexpossib}.
So, if we can show that
\begin{equation}
  \unex(\{x_1,x_2\}) > \max\{\pi_1(x_1),\pi_2(x_2)\},
\end{equation}
then we have established a contradiction.
By Eqs.~\eqref{eq:credalset} and~\eqref{eq:unex},
it suffices to show that
there is a $Q\le\upr$
such that
\begin{equation}
  \label{eq:helper:qgtmax}
  Q(\{x_1,x_2\})>\max\{\pi_1(x_1),\pi_2(x_2)\}.
\end{equation}

Now, a probability measure $Q$ which is zero everywhere
except on $\{x_1,x_2,x^*\}$ satisfies $Q\le\upr$
if and only if all of the following inequalities are satisfied:
\begin{align}
Q(\{x_1\}) &\leq \pi_1(x_1) \label{eq:ineqnoposs} \\
Q(\{x_2\}) &\leq \pi_2(x_2) \label{eq:ineqnoposs2} \\
Q(\{x_1\}) + Q(\{x_2\}) &\leq\min\{\pi_1(x_2),\pi_2(x_1)\}
\label{eq:ineqnoposs3}
\end{align}
Indeed, consider any $A\subseteq\pspace$.
\begin{enumerate}[(a)]
\item If $A\cap\{x_1,x_2,x^*\}=\emptyset$ then $Q(A)=0$,
and no constraints are required.
\item If $x^*\in A\cap\{x_1,x_2,x^*\}$ then $\upr(A)=1$,
and no constraints are required.
\item If $A\cap\{x_1,x_2,x^*\}=\{x_1\}$ then $Q(A)=Q(\{x_1\})$. Clearly,
$Q(\{x_1\})\le\upr(A)$ for all such $A$ if and only if
\begin{equation}
  Q(\{x_1\})\le\upr(\{x_1\})=\min\{\pi_1(x_1),\pi_2(x_1)\}=\pi_1(x_1).
\end{equation}
This is precisely Eq.~\eqref{eq:ineqnoposs}.
\item If $A\cap\{x_1,x_2,x^*\}=\{x_2\}$ then $Q(A)=Q(\{x_2\})$. Clearly,
$Q(\{x_2\})\le\upr(A)$ for all such $A$ if and only if
\begin{equation}
  Q(\{x_2\})\le\upr(\{x_2\})=\min\{\pi_1(x_2),\pi_2(x_2)\}=\pi_2(x_2).
\end{equation}
This is precisely Eq.~\eqref{eq:ineqnoposs2}.
\item If $A\cap\{x_1,x_2,x^*\}=\{x_1,x_2\}$ then we obtain $Q(A)=Q(\{x_1,x_2\})$. Clearly,
$Q(\{x_1,x_2\})\le\upr(A)$ for all such $A$ if and only if
\begin{align}
  Q(\{x_1,x_2\})\le\upr(\{x_1,x_2\})
  &=\min\{\Pi_1(\{x_1,x_2\}),\Pi_2(\{x_1,x_2\})\}
  \\
  &=\min\{\pi_1(x_2),\pi_2(x_1)\}
\end{align}
where the last equality follows from
Eq.~\eqref{eq:pf:inequalities:cases} (left case). This is precisely
Eq.~\eqref{eq:ineqnoposs3}.
\end{enumerate}

So, we are done if we can construct a
probability measure $Q$ on $\{x_1,x_2,x^*\}$
which simultaneously satisfies
Eqs.~\eqref{eq:helper:qgtmax},
\eqref{eq:ineqnoposs},
\eqref{eq:ineqnoposs2},
and~\eqref{eq:ineqnoposs3}.

Also note that we always have $x^*\neq x_1$ and $x^*\neq x_2$ (and
obviously also $x_1\neq x_2$), because
Eq.~\eqref{eq:pf:inequalities:cases} (left case) implies that
$\pi_1(x_1)<1$ and $\pi_2(x_2)<1$, so $\{x_1,x_2,x^*\}$ always
contains exactly three elements.

We consider two cases.

1. If $\pi_1(x_1) + \pi_2(x_2) \leq \min\{\pi_1(x_2),\pi_2(x_1)\}$,
then the probability measure $Q$ with
\begin{align}
  Q(\{x_1\})&\coloneqq \pi_1(x_1), &
  Q(\{x_2\})&\coloneqq \pi_2(x_2), &
  Q(\{x^*\})&\coloneqq 1-(\pi_1(x_1) + \pi_2(x_2))
\end{align}
clearly satisfies Eqs.~\eqref{eq:ineqnoposs},
\eqref{eq:ineqnoposs2},
and~\eqref{eq:ineqnoposs3}.
We also have that
\begin{equation}
Q(\{x_1,x_2\})
=Q(\{x_1\})+Q(\{x_2\})
=\pi_1(x_1)+\pi_2(x_2)
>\max\{\pi_1(x_1),\pi_2(x_2)\}
\end{equation}
because both $\pi_1(x_1)$ and $\pi_2(x_2)$ are strictly positive by
assumption,
so Eq.~\eqref{eq:helper:qgtmax} is satisfied as well,
finishing the proof for this case.

2. If $\pi_1(x_1) + \pi_2(x_2) >
\min\{\pi_1(x_2),\pi_2(x_1)\}$,
then the probability measure $Q$ with
\begin{align}
  Q(\{x_1\})&\coloneqq\pi_1(x_1), \\
  Q(\{x_2\})&\coloneqq\min\{\pi_1(x_2),\pi_2(x_1)\} - \pi_1(x_1), \\
  Q(\{x^*\})&\coloneqq 1- (\min\{\pi_1(x_2),\pi_2(x_1)\} )
\end{align}
clearly satisfies Eqs.~\eqref{eq:ineqnoposs},
\eqref{eq:ineqnoposs2},
and~\eqref{eq:ineqnoposs3}.
We also have that
\begin{equation}
Q(\{x_1,x_2\})=\min\{\pi_1(x_2),\pi_2(x_1)\}>\max\{\pi_1(x_1),\pi_2(x_2)\}
\end{equation}
where the strict inequality follows from
Eq.~\eqref{eq:pf:inequalities:cases} (left case), so
Eq.~\eqref{eq:helper:qgtmax} is satisfied as well, finishing the
proof for this case.
\end{proof}

\section{Exampe: a simple medical diagnosis problem}\label{sec:diagnosis-example}

To conclude this paper, we illustrate our
results on a medical diagnosis problem, inspired by
Palacios \textit{et al.}~\cite{2010:palacios}.

Consider $\pspace=\{d,h,n\}$ where $d$, $h$, and $n$ stand for
dyslexic, hyperactive and no problem, respectively.
As is explained by Palacios \textit{et al.}~\cite{2010:palacios},
it may be difficult for physicians
to recognize between dyslexia and hyperactivity of children,
yet it is important to provide reliable information.

Let us now assume that the available information is expressed by
means of possibility distributions: these may be the result of a
classification process~\cite{2010:palacios} or of an
elicitation procedure. We wish to provide a
joint summary of these distributions which is still
representable as a possibility distribution, for instance
because we want to use it in methods tailored for possibility
distributions, or because it is easier to present possibility
distributions to physicians.

\begin{example}
Two physicians provide the following possibility distributions:
\begin{center}
\begin{tabular}{c|ccc}
& $d$ & $h$ & $n$ \\
\hline
$\pi_1$ & 1 & 0.5 & 0.2 \\
$\pi_2$ & 1 &  0.3 & 0.4
\end{tabular}
\end{center}
The two physicians actually agree that dyslexia is quite possible,
but they are not in agreement on the possibility of the other two options.

The conjunction $\upr\coloneqq\min\{\Pi_1,\Pi_2\}$
avoids sure loss: for example,
the probability measure $Q$ with
$Q(\{d\})=1$ is dominated by $\upr$.
It can be verified that $\upr$ is coherent.
Interestingly,
the condition of Proposition~\ref{pr:charac-coher-prevs}
is not satisfied:
no convex
combination of the probability measures determined by the mass functions
$(0.5,0.3,0.2)\in\credalset_1$ and
$(0.6,0,0.4)\in\credalset_2$ belongs to
$\credalset_1\cup\credalset_2$.

The natural extension $\unex$ of $\upr$, which is the
upper envelope of the credal set $\credalset_1\cap\credalset_2$,
coincides with $\upr$ in this example, because $\upr$
happens to be coherent:
\begin{gather}
\unex(\{d\})=1
\qquad \unex(\{h\})=0.3
\qquad \unex(\{n\})=0.2 \\
\unex(\{h,n\})=0.4
\qquad \unex(\{d,h\})=\unex(\{d,n\})=\unex(\{d,h,n\})=1.
\end{gather}
However, $\unex$ is not a possibility measure
because
\begin{equation}
\unex(\{h,n\})=0.4>\max\{\unex(\{h\},\unex(\{n\})\}=0.3.
\end{equation}

The graphical procedure summarized
at the beginning of Section~\ref{sec:graphical-method-examples}
suggests a possible correction of $\pi_2$
for the conjunction to become a possibility measure:
\begin{center}
\begin{tabular}{c|ccc}
& $d$ & $h$ & $n$ \\ \hline $\pi'_2$ & 1 &  0.4 & 0.4
\end{tabular}
\end{center}
By Theorem~\ref{th:order},
the conjunction of $\pi_1$ and $\pi_2'$ is then a possibility measure
with possibility distribution
\begin{center}
\begin{tabular}{c|ccc}
& $d$ & $h$ & $n$ \\\hline $\pi$ & 1 &  0.4 & 0.2
\end{tabular}
\end{center}
which is still quite informative.
\end{example}

\section{Conclusions}\label{sec:conclusions}

In this paper, we have characterized in different ways the
conjunction of two possibility measures.
In particular, we have addressed the following questions:
\begin{enumerate}
\item When does the conjunction avoid sure loss?
\item When is the conjunction coherent?
\item When is the conjunction again a possibility measure?
\item When is the natural extension of the conjunction
  again a possibility measure?
\end{enumerate}
For each of these, we have provided both sufficient and necessary
conditions. We demonstrated through many examples that these
conditions remain quite restrictive; this seems to be the price to
pay for working with possibility distributions.

From a practical point, one result that we find particularly
interesting is the game-theoretic characterization of the conditions
under which the conjunction is again a possibility measure. Indeed,
this characterization offers a very simple and convenient graphical
verification method. It can also be used in practice to
heuristically adjust possibility distributions to ensure that their
conjunction remains a possibility distribution.

It is not too difficult to extend some of our results to the
conjunction of more than two possibility measures, by noting that
the conjunction can be taken in a pairwise sequential manner. Note
nevertheless that these pairwise conjunctions being possibility
measures is sufficient, but not necessary, for the conjunction of
all the possibility measures to be a possibility measure. For some
other results, such as Theorem~\ref{thm:minmax:2}, some adjustments
should be made.

As for future lines of research, we would like to point out a few.
It would be interesting to study under what conditions
possibility measures are closed under other combination rules, such
as those discussed in \cite{1992:dubois::upper,1998:moral:delsagrado:aggregation,2006:troffaes:conjunc:rule}.
We could also
study if the results can be extended to infinite possibility spaces;
although clearly the game-theoretic interpretation
may prove problematic in this respect.
Finally,
many other imprecise probability models,
such as belief functions, probability boxes, and so on,
might benefit from similar studies.

\section*{Acknowledgements}
The research in this paper has been supported by project
MTM2010-17844 and by project Labex MS2T (Reference ANR-11-IDEX-0004-02).

\bibliographystyle{plainurl}
\bibliography{all}

\begin{thebibliography}{10}

\bibitem{2009:alvarez}
Diego~A. Alvarez.
\newblock A {M}onte {C}arlo-based method for the estimation of lower and upper
  probabilities of events using infinite random sets of indexable type.
\newblock {\em Fuzzy Sets and Systems}, 160(3):384--401, 2009.
\newblock \href {http://dx.doi.org/10.1016/j.fss.2008.08.006}
  {\path{doi:10.1016/j.fss.2008.08.006}}.

\bibitem{2007:baudrit}
C{\'e}dric Baudrit, Dominique Guyonnet, and Didier Dubois.
\newblock Joint propagation of variability and imprecision in assessing the
  risk of groundwater contamination.
\newblock {\em Journal of contaminant hydrology}, 93(1):72--84, 2007.
\newblock \href {http://dx.doi.org/10.1016/j.jconhyd.2007.01.015}
  {\path{doi:10.1016/j.jconhyd.2007.01.015}}.

\bibitem{2001:bemporad::convexity}
Alberto Bemporad, Komei Fukuda, and Fabio~D. Torrisi.
\newblock Convexity recognition of the union of polyhedra.
\newblock {\em Computational Geometry}, 18(3):141--154, 2001.
\newblock \href {http://dx.doi.org/10.1016/S0925-7721(01)00004-9}
  {\path{doi:10.1016/S0925-7721(01)00004-9}}.

\bibitem{2010:benavoli:aggregation}
Alessio Benavoli and Alessandro Antonucci.
\newblock An aggregation framework based on coherent lower previsions:
  Application to zadeh's paradox and sensor networks.
\newblock {\em International Journal of Approximate Reasoning},
  51(9):1014--1028, 2010.
\newblock \href {http://dx.doi.org/10.1016/j.ijar.2010.08.010}
  {\path{doi:10.1016/j.ijar.2010.08.010}}.

\bibitem{1994:chateauneuf::combination}
Alain Chateauneuf.
\newblock Combination of compatible belief functions and relation of
  specificity.
\newblock In {\em Advances in the {D}empster-{S}hafer theory of evidence},
  pages 97--114. Wiley, 1994.

\bibitem{1994:coolen}
Frank Coolen.
\newblock {\em Statistical modelling of experts opinions using imprecise
  probabilities}.
\newblock PhD thesis, Technical University of Eindhoven, 1994.

\bibitem{1987:decampos}
Luis~M. de~Campos.
\newblock {\em Caracterizaci{\'o}n y estudio de medidas e integrales difusas a
  partir de probabilidades}.
\newblock PhD thesis, University of Granada, 1987.

\bibitem{1997:decooman:possibility1}
Gert de~Cooman.
\newblock Possibility theory {I}: the measure- and integral-theoretic
  groundwork.
\newblock {\em International Journal of General Systems}, 25:291--323, 1997.
\newblock \href {http://dx.doi.org/10.1080/03081079708945160}
  {\path{doi:10.1080/03081079708945160}}.

\bibitem{2005:decooman::behavioural}
Gert De~Cooman.
\newblock A behavioural model for vague probability assessments.
\newblock {\em Fuzzy sets and systems}, 154(3):305--358, 2005.
\newblock \href {http://dx.doi.org/10.1016/j.fss.2005.01.005}
  {\path{doi:10.1016/j.fss.2005.01.005}}.

\bibitem{1999:decooman:aeyels::sup:pres:upp:prob}
Gert {de Cooman} and Dirk Aeyels.
\newblock Supremum preserving upper probabilities.
\newblock {\em Information Sciences}, 118:173--212, 1999.
\newblock \href {http://dx.doi.org/10.1016/S0020-0255(99)00007-9}
  {\path{doi:10.1016/S0020-0255(99)00007-9}}.

\bibitem{2004:decooman:troffaes::products:aggregation}
Gert {d}e Cooman and Matthias C.~M. Troffaes.
\newblock Coherent lower previsions in systems modelling: products and
  aggregation rules.
\newblock {\em Reliability Engineering and System Safety}, 85(1--3):113--134,
  2004.
\newblock \href {http://dx.doi.org/10.1016/j.ress.2004.03.007}
  {\path{doi:10.1016/j.ress.2004.03.007}}.

\bibitem{2011:destercke}
S\'{e}bastien Destercke and Didier Dubois.
\newblock Idempotent conjunctive combination of belief functions: extending the
  idempotent rule of possibility theory.
\newblock {\em Information Sciences}, 181:3925--3945, 2011.
\newblock \href {http://dx.doi.org/10.1016/j.ins.2011.05.007}
  {\path{doi:10.1016/j.ins.2011.05.007}}.

\bibitem{2009:destercke}
S{\'e}bastien Destercke, Didier Dubois, and Eric Chojnacki.
\newblock A consonant approximation of the product of independent consonant
  random sets.
\newblock {\em International Journal of Uncertainty, Fuzziness and
  Knowledge-Based Systems}, 17(06):773--792, 2009.
\newblock \href {http://dx.doi.org/10.1142/S0218488509006261}
  {\path{doi:10.1142/S0218488509006261}}.

\bibitem{1986:dubois}
Didier Dubois and Henri Prade.
\newblock A set theoretic view of belief functions.
\newblock {\em International Journal of General Systems}, 12:193--226, 1986.
\newblock \href {http://dx.doi.org/10.1007/978-3-540-44792-4_14}
  {\path{doi:10.1007/978-3-540-44792-4_14}}.

\bibitem{1988:dubois:possibility}
Didier Dubois and Henri Prade.
\newblock {\em Possibility Theory -- An Approach to Computerized Processing of
  Uncertainty}.
\newblock Plenum Press, New York, 1988.

\bibitem{1988:dubois::comb}
Didier Dubois and Henri Prade.
\newblock Representation and combination of uncertainty with belief functions
  and possibility measures.
\newblock {\em Computational Intelligence}, 4:244--264, 1988.
\newblock \href {http://dx.doi.org/10.1111/j.1467-8640.1988.tb00279.x}
  {\path{doi:10.1111/j.1467-8640.1988.tb00279.x}}.

\bibitem{1992:dubois::upper}
Didier Dubois and Henri Prade.
\newblock When upper probabilities are possibility measures.
\newblock {\em Fuzzy Sets and Systems}, 49(1):65--74, 1992.
\newblock \href {http://dx.doi.org/10.1016/0165-0114(92)90110-P}
  {\path{doi:10.1016/0165-0114(92)90110-P}}.

\bibitem{1999:dubois}
Didier Dubois, Henri Prade, and Ron Yager.
\newblock Merging fuzzy information.
\newblock In H.~Prade J.~C.~Bezdek, D.~Didier, editor, {\em Fuzzy sets in
  approximative reasoning and information systems}, Handbook of Fuzzy Sets,
  chapter~6, pages 335--401. Kluwer, Dordrecht, 1999.

\bibitem{1982:giles:semantics}
Robin Giles.
\newblock Semantics for fuzzy reasoning.
\newblock {\em International Journal of Man-Machine Studies}, 17(4):401--415,
  1982.
\newblock \href {http://dx.doi.org/10.1016/S0020-7373(82)80041-2}
  {\path{doi:10.1016/S0020-7373(82)80041-2}}.

\bibitem{2008:hunter}
Anthony Hunter and Weiru Liu.
\newblock A context-dependent algorithm for merging uncertain information in
  possibility theory.
\newblock {\em IEEE Transactions on Systems Man and Cybernetics: part A},
  38(6):1385--1397, 2008.
\newblock \href {http://dx.doi.org/10.1109/TSMCA.2008.2003457}
  {\path{doi:10.1109/TSMCA.2008.2003457}}.

\bibitem{1950:kolmogorov}
Andrei~N. Kolmogorov.
\newblock {\em Foundations of the Theory of Probability}.
\newblock Chelsea Publishing Company, New York, 1950.

\bibitem{2006:kroupa}
Tom{\'a}{\v{s}} Kroupa.
\newblock How many extreme points does the set of probabilities dominated by a
  possibility measure have.
\newblock In {\em Proceedings of 7th Workshop on Uncertainty Processing WUPES},
  volume~6, pages 89--95, 2006.

\bibitem{1957:luce:raiffa}
R.~Duncan Luce and Howard Raiffa.
\newblock {\em Games and Decisions: Introduction and Critical Survey}.
\newblock Dover Publications, 1957.

\bibitem{2008:miranda::survey:lowprevs}
Enrique Miranda.
\newblock A survey of the theory of coherent lower previsions.
\newblock {\em International Journal of Approximate Reasoning}, 48(2):628--658,
  2008.
\newblock \href {http://dx.doi.org/10.1016/j.ijar.2007.12.001}
  {\path{doi:10.1016/j.ijar.2007.12.001}}.

\bibitem{2003:miranda::extreme}
Enrique Miranda, In{\'e}s Couso, and Pedro Gil.
\newblock Extreme points of credal sets generated by 2-alternating capacities.
\newblock {\em International Journal of Approximate Reasoning}, 33(1):95--115,
  2003.
\newblock \href {http://dx.doi.org/10.1016/S0888-613X(02)00149-4}
  {\path{doi:10.1016/S0888-613X(02)00149-4}}.

\bibitem{2003:miranda::epistemic}
Enrique Miranda and Gert De~Cooman.
\newblock Epistemic independence in numerical possibility theory.
\newblock {\em International journal of approximate reasoning}, 32(1):23--42,
  2003.
\newblock \href {http://dx.doi.org/10.1016/S0888-613X(02)00087-7}
  {\path{doi:10.1016/S0888-613X(02)00087-7}}.

\bibitem{1998:moral:delsagrado:aggregation}
S.~Moral and J.~{del Sagrado}.
\newblock Aggregation of imprecise probabilities.
\newblock In B.~Bouchon-Meunier, editor, {\em Aggregation and Fusion of
  Imperfect Information}, pages 162--188. Physica-Verlag, New York, 1998.

\bibitem{2010:palacios}
Ana~M. Palacios, Luciano S{\'a}nchez, and In{\'e}s Couso.
\newblock Diagnosis of dyslexia with low quality data with genetic fuzzy
  systems.
\newblock {\em International Journal of Approximate Reasoning},
  51(8):993--1009, 2010.
\newblock \href {http://dx.doi.org/10.1016/j.ijar.2010.07.008}
  {\path{doi:10.1016/j.ijar.2010.07.008}}.

\bibitem{1995:sandri}
Sandra~A. Sandri, Didier Dubois, and Henk~W. Kalfsbeek.
\newblock Elicitation, assessment, and pooling of expert judgments using
  possibility theory.
\newblock {\em IEEE Transactions on Fuzzy Systems}, 3(3):313--335, 1995.
\newblock \href {http://dx.doi.org/10.1109/91.413236}
  {\path{doi:10.1109/91.413236}}.

\bibitem{1961:smith}
Cedric A.~B. Smith.
\newblock Consistency in statistical inference and decision.
\newblock {\em Journal of the Royal Statistical Society}, B(23):1--37, 1961.
\newblock URL: \url{http://www.jstor.org/stable/2983842}.

\bibitem{2006:troffaes:conjunc:rule}
Matthias C.~M. Troffaes.
\newblock Generalising the conjunction rule for aggregating conflicting expert
  opinions.
\newblock {\em International Journal of Intelligent Systems}, 21(3):361--380,
  Mar 2006.
\newblock \href {http://dx.doi.org/10.1002/int.20140}
  {\path{doi:10.1002/int.20140}}.

\bibitem{2014:troffaes:decooman::lower:previsions}
Matthias C.~M. Troffaes and Gert de~Cooman.
\newblock {\em Lower Previsions}.
\newblock Wiley Series in Probability and Statistics. Wiley, 2014.

\bibitem{2013:troffaes:infsci:pbox:possib}
Matthias C.~M. Troffaes, Enrique Miranda, and S{\'e}bastien Destercke.
\newblock On the connection between probability boxes and possibility measures.
\newblock {\em Information Sciences}, 224:88--108, March 2013.
\newblock \href {http://arxiv.org/abs/1103.5594} {\path{arXiv:1103.5594}},
  \href {http://dx.doi.org/10.1016/j.ins.2012.09.033}
  {\path{doi:10.1016/j.ins.2012.09.033}}.

\bibitem{1981:walley:lowuppprobs}
Peter Walley.
\newblock Coherent lower (and upper) probabilities.
\newblock Technical report, University of Warwick, Coventry, 1981.
\newblock Statistics Research Report 22.

\bibitem{1982:walley:aggregation}
Peter Walley.
\newblock The elicitation and aggregation of beliefs.
\newblock Technical report, University of Warwick, Coventry, 1982.
\newblock Statistics Research Report 23.

\bibitem{1991:walley}
Peter Walley.
\newblock {\em Statistical Reasoning with Imprecise Probabilities}.
\newblock Chapman and Hall, London, 1991.

\bibitem{1996:walley::uncertaintyinexpertsystems}
Peter Walley.
\newblock Measures of uncertainty in expert systems.
\newblock {\em Artificial Intelligence}, 83:1--58, 1996.
\newblock \href {http://dx.doi.org/10.1016/0004-3702(95)00009-7}
  {\path{doi:10.1016/0004-3702(95)00009-7}}.

\bibitem{1983:yager}
Ron Yager.
\newblock Entropy and specificity in a mathematical theory of evidence.
\newblock {\em International Journal of General Systems}, 9:249--260, 1983.
\newblock \href {http://dx.doi.org/10.1080/03081078308960825}
  {\path{doi:10.1080/03081078308960825}}.

\bibitem{1978:zadeh::possibility}
Lofti~A. Zadeh.
\newblock Fuzzy sets as a basis for a theory of possibility.
\newblock {\em Fuzzy Sets and Systems}, 1:3--28, 1978.
\newblock \href {http://dx.doi.org/10.1016/0165-0114(78)90029-5}
  {\path{doi:10.1016/0165-0114(78)90029-5}}.

\bibitem{2013:zaffalon::probtime}
Marco Zaffalon and Enrique Miranda.
\newblock Probability and time.
\newblock {\em Artificial Intelligence}, 198:1--51, May 2013.
\newblock \href {http://dx.doi.org/10.1016/j.artint.2013.02.005}
  {\path{doi:10.1016/j.artint.2013.02.005}}.

\end{thebibliography}

\end{document}